\documentclass[a4paper, reqno, oneside]{amsart}

\usepackage[latin1]{inputenc}
\usepackage{amsmath}
\usepackage{amsfonts}
\usepackage{amssymb}
\usepackage{graphicx}
\usepackage{enumerate}

\newtheorem{theorem}{Theorem}[section]

\theoremstyle{plain}

\numberwithin{subcase}{case}

\newtheorem{corollary}[theorem]{Corollary}

\newtheorem{lemma}[theorem]{Lemma}

\begin{document}

\author{Karl Heuer}
\address{Karl Heuer, Fachbereich Mathematik, Universit\"{a}t Hamburg, Bundesstra{\ss}e 55, 20146 Hamburg, Germany}

\title[]{Excluding a full grid minor}

\begin{abstract}
In this paper we characterise the graphs containing a $\mathbb{Z} \times \mathbb{Z}$ grid minor in a similar way as it has been done by Halin for graphs with an $\mathbb{N} \times \mathbb{Z}$ grid minor.
Using our characterisation, we describe the structure of graphs without $\mathbb{Z} \times \mathbb{Z}$ grid minors in terms of tree-decompositions.
\end{abstract}

\maketitle

\section{Introduction}

In extremal graph theory it is common to analyse the structure of graphs which do not contain a certain minor or subdivision of some graph.
This goes hand in hand with the search for conditions in terms of graph invariants, such as degree conditions, that force the existence of certain minors or subdivisions.
Extending the scope of extremal questions to include infinite graphs, it is helpful to consider new graph invariants, which may not be defined for finite graphs, in order to gain more information about the structure of infinite graphs.
For an overview of results in the field of extremal infinite graph theory see the surveys of Diestel~\cite{diestel_arx} and of Stein~\cite{stein}.

One example for such a new invariant is the degree of an end of a graph.
The \textit{ends} of a graph are the equivalence classes of the \textit{rays}, i.e., one-way infinite paths, where we say that two rays are equivalent if an only if they cannot be separated by finitely many vertices in the graph.
Now the \textit{degree} of an end is defined as the maximum number of disjoint rays in this end (including `infinitely many').
The foundation of this definition, namely, that the end degree is well-defined, is provided by the following theorem of Halin.

\begin{theorem}\label{inf_rays}\cite[Satz~1]{halin}
If a graph contains $n$ pairwise disjoint rays for every $n \in \mathbb{N}$, then it contains infinitely many pairwise disjoint rays.
\end{theorem}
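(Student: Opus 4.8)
The plan is to reduce to the case of a single end that contains arbitrarily many disjoint rays, and then to build infinitely many disjoint rays there by a diagonal construction. First I would pass to a countable subgraph with the same property: the union $H$ of one family of $k$ pairwise disjoint rays, chosen for each $k\in\mathbb N$, is countable and still contains $k$ disjoint rays for every $k$; and if infinitely many components of $H$ contain a ray we are done at once, so we may assume $H$ is connected and rename it $G$.

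Next I would distinguish two cases according to the number of ends of $G$. Suppose first that $G$ has infinitely many ends. Fix distinct ends $\omega_1,\omega_2,\dots$ and construct pairwise disjoint rays $R_1,R_2,\dots$ with $R_n\in\omega_n$ inductively: given $R_1,\dots,R_n$, pick a finite set $S\subseteq V(G)$ separating $\omega_{n+1}$ from each of $\omega_1,\dots,\omega_n$; since each $R_j$ with $j\le n$ has a tail in the component of $G-S$ belonging to $\omega_j$, and $\omega_j\ne\omega_{n+1}$, the union $R_1\cup\cdots\cup R_n$ meets the component of $G-S$ belonging to $\omega_{n+1}$ in only finitely many vertices; hence some tail of some ray of $\omega_{n+1}$ lies in that component and avoids $R_1\cup\cdots\cup R_n$, and we take it as $R_{n+1}$. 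This already gives infinitely many disjoint rays.

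Now suppose $G$ has only finitely many ends. Since every family of $k$ disjoint rays distributes over these finitely many ends, a pigeonhole argument over $k\in\mathbb N$ produces a single end $\omega$ that contains $k$ disjoint rays for every $k$. Enumerate $V(G)=\{v_1,v_2,\dots\}$ and let $D_i$ be the component of $G-\{v_1,\dots,v_i\}$ in which $\omega$ lives. These $D_i$ are connected, nested with $\bigcap_iD_i=\varnothing$, have finite neighbourhoods $N_G(D_i)\subseteq\{v_1,\dots,v_i\}$ in $G$, and --- since every ray of $\omega$ has a tail in each $D_i$ --- each $D_i$ again contains $k$ disjoint rays of $\omega$ for every $k$.

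Finally I would extract infinitely many disjoint rays from this nested family, which is the crux of the whole argument. The idea is to build rays $Q_1,Q_2,\dots$ one finite segment at a time: one maintains after $n$ steps a system of $n$ pairwise disjoint finite paths that lengthen to $n$ disjoint rays of $\omega$ reaching deep inside some $D_{i_n}$, treating only the portions outside $D_{i_n}$ as final; to reach stage $n+1$ one works inside $D_{i_n}$, which still carries $n+1$ disjoint rays towards $\omega$, and uses an augmenting-path argument in the spirit of Menger's theorem to reroute the current rays within $D_{i_n}$ and adjoin one further ray, fixing a further finite stretch of each. The finiteness of the boundaries $N_G(D_i)$ is exactly what confines the rerouting to $D_{i_n}$, so the already-fixed pieces are never disturbed, and the union of all fixed stretches gives the required rays. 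I expect this augmenting-and-limiting step to be the one genuine difficulty; the reductions and the case of infinitely many ends are routine.
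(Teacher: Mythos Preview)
The paper does not contain its own proof of this statement: Theorem~\ref{inf_rays} is quoted from Halin~\cite{halin} (and a textbook proof appears in~\cite[Thm.~8.2.5]{diestel_buch}), so there is nothing in the paper to compare your proposal against.

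That said, your outline is broadly in the spirit of the standard proofs. The reductions to a countable graph and to a single thick end are routine and correct as you describe them (the infinitely-many-ends case works). The genuine content, as you rightly identify, is the final limiting construction inside the nested sequence $D_1\supseteq D_2\supseteq\cdots$, and here your sketch is too thin to count as a proof. The difficulty is that when you pass from $n$ to $n+1$ rays inside $D_{i_n}$, the $n+1$ disjoint $\omega$-rays that $D_{i_n}$ is guaranteed to contain need have nothing to do with the $n$ rays you already possess; one must actually exhibit the Menger-type linkage argument that reroutes the current rays onto $n$ of the new $n+1$ rays while fixing a further initial segment of each, and check that only boundedly many of the old segments are altered. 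In the standard presentation this is done by applying Menger's theorem between the $n$ designated entry points on $\partial D_{i_n}$ and a family of $n+1$ disjoint rays inside $D_{i_n}$, and then arguing that each ray's initial segment eventually stabilises. Your phrase ``augmenting-path argument in the spirit of Menger's theorem'' gestures at this, but the stabilisation is not automatic from what you wrote; you would need to spell out why the rerouting at stage $n+1$ does not reach back past some fixed earlier boundary, which is exactly where the finiteness of $N_G(D_i)$ is used quantitatively rather than just qualitatively.
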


\noindent Furthermore, although without stating the term `end degree' explicitly, Halin used the following theorem to show that an end of infinite degree forces the existence of an $\mathbb{N} \times \mathbb{N}$ grid minor.
In fact he actually proved that it forces a subdivision of the graph $H^{\infty}$ shown in Figure~\ref{H_inf}.
Then the statement about the $\mathbb{N} \times \mathbb{N}$ grid minor follows, since the graph $H^{\infty}$ contains the $\mathbb{N} \times \mathbb{N}$ grid as a minor.
Note that the question of whether a graph contains an $\mathbb{N} \times \mathbb{Z}$ grid minor is not more difficult than asking for an $\mathbb{N} \times \mathbb{N}$ grid minor since the $\mathbb{N} \times \mathbb{N}$ grid contains a subdivision of the $\mathbb{N} \times \mathbb{Z}$ grid.

\begin{theorem}\label{NxN}\cite[Satz~$4'$]{halin}
Whenever a graph contains infinitely many pairwise disjoint and equivalent rays, it contains a subdivision of $H^\infty$.
\end{theorem}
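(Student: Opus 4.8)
The plan is to construct a subdivision of $H^\infty$ directly, building it up ``one layer at a time'' along a decreasing sequence of subfamilies of the given rays and then passing to the union.

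The hypothesis supplies a countable family $\mathcal R=\{R_0,R_1,R_2,\dots\}$ of pairwise disjoint, pairwise equivalent rays. The engine of the argument is the following observation, which I would isolate as a lemma. If $A$ and $B$ are two rays from $\mathcal R$, then, being equivalent, they cannot be separated by finitely many vertices, so Menger's theorem yields infinitely many pairwise disjoint $A$--$B$ paths; these paths have distinct endpoints on each of $A$ and $B$, so ordering them along $A$ and applying a routine infinite Ramsey argument to their endpoints on $B$ produces infinitely many pairwise disjoint $A$--$B$ paths whose endpoints occur monotonically along both rays. After replacing $A$ and $B$ by suitable tails this is exactly a subdivision of the infinite ladder whose sides are (tails of) $A$ and $B$ and whose rungs are internally disjoint from $A\cup B$. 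In short: \emph{any two disjoint equivalent rays can be joined by such a ``ladder''.}

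Next I would run the recursion. The invariant after step $n$ reads: a subgraph $H_n\subseteq G$ that is a subdivision of the first $n$ layers of $H^\infty$ (see Figure~\ref{H_inf}), using only finite initial segments of finitely many rays of $\mathcal R$ together with finitely many connecting paths; plus an infinite subfamily $\mathcal R_n\subseteq\mathcal R$ of rays disjoint from $H_n$, still pairwise equivalent, one of which serves as the current ``outermost'' ray. The passage from $H_n$ to $H_{n+1}$ adds only a bounded amount of material --- one further initial segment of a new ray drawn from $\mathcal R_n$, and one further rung in each ladder already present --- chosen via the lemma to be disjoint from $H_n$, and arranged so that only finitely many members of $\mathcal R_n$ are disturbed; the undisturbed rays then form $\mathcal R_{n+1}$. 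Finally $\bigcup_n H_n$ is the desired object: it is connected, and its branch vertices together with the subdividing paths between them realise exactly the adjacencies of $H^\infty$.

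The real difficulty, and the step I expect to be the main obstacle, is the disjointness bookkeeping inside the recursion. A ladder produced by the lemma between the outermost ray and a fresh ray $R$ need not avoid the \emph{other} rays of $\mathcal R_n$, nor the (finitely many, finite) ray-segments already frozen into $H_n$. The latter is handled by never consuming more than a finite initial segment of any ray and freely passing to tails, which is legitimate since a tail of a ray is equivalent to the ray and misses any prescribed finite set. For the former one reroutes: whenever a rung would run into another reservoir ray $R'$, one truncates it there and continues along $R'$, in effect promoting $R'$ (or a tail of it) to be the new outermost ray and discarding the useless remainder; a diagonal/pigeonhole argument over the countably many steps then guarantees that infinitely many reservoir rays survive untouched throughout. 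Making these two manoeuvres coexist --- keeping the reservoir infinite, the consumed parts finite, and the limit object a subdivision of all of $H^\infty$ rather than of some proper subgraph --- is the technical heart of the proof, and is where, as in Halin's original argument, I expect most of the effort to be spent.
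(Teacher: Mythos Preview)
The paper does not contain a proof of this theorem: it is quoted as a result of Halin~\cite[Satz~$4'$]{halin}, and the paper merely remarks that a shorter proof is given by Diestel~\cite{new_grid,diestel_buch}. There is therefore no proof in the paper to compare your proposal against.

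That said, your outline is very much in the spirit of the standard arguments. Both Halin's original proof and Diestel's streamlined version proceed exactly as you describe: build the subdivision of $H^\infty$ one horizontal layer at a time, using at each step a fresh ray from an infinite reservoir and a Menger/Ramsey ``ladder'' of linking paths to the current outer ray, while passing to tails to keep everything disjoint from the finite piece already constructed. You have also correctly identified the one genuine difficulty, namely that a linking path to the new ray may hit other reservoir rays en route; Diestel's proof handles this not by a diagonal argument over the whole construction, but locally: at the step one simply follows the first such path until it first meets \emph{any} ray of the reservoir, declares that ray to be the new outer ray, and discards only that one ray from the reservoir. This guarantees that exactly one reservoir ray is consumed per step, so the reservoir stays infinite automatically and no separate pigeonhole bookkeeping is needed. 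Your rerouting idea is the same manoeuvre, but phrased more loosely; tightening it in this way is what turns the sketch into a proof.
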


\noindent Beside Halin's proof of Theorem~\ref{NxN}, there is now also a shorter proof of this theorem by Diestel (see \cite{new_grid} or \cite[Thm.~8.2.6]{diestel_buch}).
Note that the converse of this implication is obviously true as well.
So Theorem~\ref{NxN} gives a characterisation of graphs without a subdivision of $H^\infty$ and therefore also of graphs without an $\mathbb{N} \times \mathbb{Z}$ grid minor.

Robertson, Seymour and Thomas characterized the structure of graphs without ${\mathbb{N} \times \mathbb{Z}}$ grid minors as those that have tree-decompositions into finite parts and with finite adhesion.
A tree-decomposition into finite parts has \textit{finite adhesion} if along each ray of the tree the sizes of the adhesion sets corresponding to its edges are infinitely often less than some fixed finite number.
Given such a tree-decomposition, an $\mathbb{N} \times \mathbb{Z}$ grid minor cannot be contained in a part because all of these are finite.
The only other possibility where such a grid minor could lie in a graph would be in the union of the parts along a ray of the tree of the tree-decomposition.
However, the finite adhesion prevents this possibility.

\begin{theorem}\label{NxZ_td}\cite[(2.6)]{rob_sey_thom}
A graph has no $\mathbb{N} \times \mathbb{Z}$ grid minor if and only if it has a tree-decomposition into finite parts and with finite adhesion.
\end{theorem}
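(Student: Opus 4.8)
The plan is to recast the statement, using Halin's Theorems~\ref{inf_rays} and~\ref{NxN}, as the assertion that $G$ has an \emph{end of infinite degree} exactly when $G$ admits no tree-decomposition into finite parts with finite adhesion. That an $\mathbb{N}\times\mathbb{Z}$ grid minor forces an end of infinite degree is immediate: if $(B_{i,j})_{i\in\mathbb{N},\,j\in\mathbb{Z}}$ are branch sets of such a minor, the $i$-th row $\bigcup_{j\in\mathbb{Z}}B_{i,j}$ contains a double ray $D_i$, the $D_i$ are pairwise disjoint, and no finite vertex set separates $D_i$ from $D_{i'}$, since deleting finitely many vertices kills only finitely many branch sets and hence leaves infinitely many of the edges between rows $i$ and $i'$ intact; so the $D_i$ lie in one end, of infinite degree. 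Conversely, an end of infinite degree yields by Theorem~\ref{NxN} a subdivision of $H^\infty$, and as $H^\infty$ has an $\mathbb{N}\times\mathbb{N}$ grid minor which in turn contains a subdivision of the $\mathbb{N}\times\mathbb{Z}$ grid, we recover the grid minor. It thus suffices to prove the displayed equivalence about ends.

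One implication is routine. From a tree-decomposition $(T,\mathcal{V})$ into finite parts with finite adhesion, every edge of $T$ induces a separation of $G$ with finite separator, namely the corresponding adhesion set. A ray of $G$ meets every bag finitely and so passes through infinitely many bags; the bags it meets span an infinite subtree of $T$, and for pairwise disjoint equivalent rays these subtrees pairwise meet (else a finite adhesion set would separate two of the rays) but have no common node (which would lie in a bag meeting infinitely many disjoint rays), so by a standard compactness argument for subtrees of a tree they converge to a common end of $T$. The finite adhesion along that end of $T$ then bounds the number of disjoint rays of $G$ that can lie there, contradicting infinitude; this last step is the one sketched before the theorem, and I would only fill in the details.

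The real content is the other implication: assuming $G$ has no end of infinite degree, I would build the tree-decomposition by recursion. Maintain a tree-decomposition of $G$ with finite adhesion in which only finitely many bags are still infinite, starting from the one-node decomposition; whenever a bag $V_t$ is infinite, split it along a suitable finite separator of the ``part of $G$ at $t$'' --- the subgraph $G[V_t]$ with a clique added on each of its adhesion sets --- replacing $V_t$ by strictly smaller bags and keeping every new adhesion set inside a fixed finite set, so finite adhesion survives. Run this transfinitely, taking unions at limit stages; it terminates with a tree-decomposition into finite parts with finite adhesion, provided the recursion can neither get stuck nor run forever.

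Showing that neither failure occurs, under the hypothesis, is the crux, and is the technical content of \cite{rob_sey_thom}; I expect this to be the main obstacle. It cannot get stuck: a part of $G$ admitting no such finite separator is infinitely connected, and an infinitely connected graph on infinitely many vertices contains a $K_{\aleph_0}$ minor (built greedily, each further branch set a finite tree through the still infinitely connected remainder), hence an $\mathbb{N}\times\mathbb{Z}$ grid minor, hence --- tracing this back through $G$ --- an end of infinite degree in $G$. It cannot run forever: along a branch of the growing tree on which bags stay infinite, the finite separators produced have unbounded size, so Menger's theorem gives, for every $n$, a family of $n$ disjoint paths descending through these separators; diagonalising over $n$ and applying Theorem~\ref{inf_rays} produces infinitely many pairwise disjoint equivalent rays of $G$, again an end of infinite degree. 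Making these two extractions rigorous --- and the bookkeeping that forces exactly one of termination, stuck, or runaway, together with the limit argument in the runaway case --- is where the work lies; the reduction to ends and the easy implication are, as noted, routine.
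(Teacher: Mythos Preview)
The paper does not prove this theorem: it is quoted from Robertson, Seymour and Thomas \cite[(2.6)]{rob_sey_thom} as a known background result. The only argument the paper offers is the informal paragraph immediately preceding the statement, which sketches the easy direction (a tree-decomposition into finite parts with finite adhesion leaves no room for an $\mathbb{N}\times\mathbb{Z}$ grid minor, since such a minor can neither sit in a single finite part nor thread a ray of the tree past infinitely many bounded adhesion sets). There is thus nothing in the paper to compare your proposal against for the substantive direction.

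Your reduction to the assertion that $G$ has an end of infinite degree if and only if it admits no such tree-decomposition, via Theorems~\ref{inf_rays} and~\ref{NxN}, is the right reformulation and matches the spirit of the paper's discussion. Two remarks on the sketch itself. First, in the easy direction your phrasing in terms of ``the subtree spanned by the bags a ray meets'' is slightly off: the set of nodes $t$ with $V_t\cap R\neq\emptyset$ need not be connected in $T$; the standard argument instead orients each edge of $T$ toward the side containing a tail of $R$, which yields either a sink node (impossible, as $V_t$ is finite) or an end of $T$. Second, for the hard direction you correctly locate the real work in \cite{rob_sey_thom} and outline a plausible transfinite splitting scheme, but the assertion that along a non-terminating branch the separators must have \emph{unbounded} size is not justified as stated, and the limit-stage bookkeeping is genuinely delicate. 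Your proposal is therefore an honest plan rather than a proof --- which is entirely appropriate here, since the paper itself simply cites the result.
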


While all the above theorems give characterisations for when graphs do or do not contain an ${\mathbb{N} \times \mathbb{Z}}$ grid minor, it was not clear whether a similar characterisation exists for $\mathbb{Z} \times \mathbb{Z}$ grids.
The main theorem in this paper, Theorem~\ref{main_thm}, and Corollary~\ref{cor_main} give characterisations for a $\mathbb{Z} \times \mathbb{Z}$ grid minor in the same spirit as the results above do for an $\mathbb{N} \times \mathbb{Z}$ grid minor.
The key idea is to consider not just sets of disjoint equivalent rays but \textit{bundles}, which are sets of disjoint equivalent rays having the additional property that there are infinitely many disjoint cycles that intersect with each ray of the bundle, but only in a path.
Graphically, the cycles of a bundle can be viewed as concentric cycles around the common end in which the rays of the bundle lie.
It is not difficult to see that graphs with a $\mathbb{Z} \times \mathbb{Z}$ grid minor contain arbitrarily large bundles.
But it turns out that the converse is also true, and so containing arbitrarily large bundles is not only necessary for the existence of a $\mathbb{Z} \times \mathbb{Z}$ grid minor, but also sufficient.
Now let us state the main theorem and its corollary precisely.
See Section~2 for the definitions of the involved terms.

\begin{theorem}\label{main_thm}
For a graph $G$ the following are equivalent:
\begin{enumerate}[\normalfont(i)]
\item There is an end $\omega$ of $G$ and $n$-bundles $B_n$ for every $n \in \mathbb{N}$ with $B_n \subseteq \omega$.
\item There is an $\infty$-bundle in $G$.
\item There is a consistent $\infty$-bundle in $G$.
\item $G$ contains a subdivision of the Dartboard.
\item $G$ contains a $\mathbb{Z} \times \mathbb{Z}$ grid as a minor.
\item $G$ contains a set $\mathcal{R}$ of infinitely many equivalent disjoint rays such that for every $R \in \mathcal{R}$ all rays in $\mathcal{R} \setminus \lbrace R \rbrace$ are still equivalent in $G-R$.
\end{enumerate}
\end{theorem}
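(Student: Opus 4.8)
The plan is to prove all six statements equivalent along a cyclic chain of implications, say $\text{(i)}\Rightarrow\text{(ii)}\Rightarrow\text{(iii)}\Rightarrow\text{(iv)}\Rightarrow\text{(v)}\Rightarrow\text{(vi)}\Rightarrow\text{(i)}$ (the order of a few links is not essential), disposing of the short steps first. The reverse implications among (i), (ii), (iii) that are not part of the chain are immediate, since a consistent $\infty$-bundle is in particular an $\infty$-bundle, and an $\infty$-bundle restricts to an $n$-bundle, inside the end containing its rays, for every $n$. For (iv)$\Rightarrow$(v) one exhibits an explicit $\mathbb{Z}\times\mathbb{Z}$ grid minor inside the Dartboard --- its concentric cycles providing the branch sets for one family of grid lines and its radial rays the other, in the same way that $H^{\infty}$ is seen to contain an $\mathbb{N}\times\mathbb{N}$ grid minor --- so that a subdivision of the Dartboard contains the same minor. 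For (v)$\Rightarrow$(vi) one writes down a suitable family $\mathcal{R}$ directly in the $\mathbb{Z}\times\mathbb{Z}$ grid (for instance the rays obtained by cutting each column at a fixed row), checks that deleting any one of them leaves the grid one-ended and hence the remaining rays equivalent, and transports $\mathcal{R}$ through the branch sets of the assumed minor into $G$, using that equivalence of rays inside a subgraph is inherited by the whole graph.

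The first technically substantial step is (i)$\Rightarrow$(ii). From $n$-bundles $B_n\subseteq\omega$ for all $n$ one already gets, by Theorem~\ref{inf_rays}, infinitely many disjoint rays in $\omega$; the real task is to produce a single infinite disjoint family of rays together with an infinite disjoint family of cycles witnessing the bundle property for all of them simultaneously. The approach is a compactness argument along an increasing exhaustion $V_1\subseteq V_2\subseteq\cdots$ of $G$ by finite vertex sets, in the spirit of the proofs of Theorems~\ref{inf_rays} and \ref{NxN}: one extracts from the $B_n$ an increasing sequence of compatible finite ``pieces of a bundle'' agreeing on each $V_k$ and passes to a diagonal limit. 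The feature absent from the classical ray case is that the cycle families must be kept coherent through this limit and prevented from escaping to infinity, and making this work is exactly a Halin-type compactness theorem for bundles --- the main technical ingredient of the paper.

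The other constructive core is (ii)$\Rightarrow$(iii)$\Rightarrow$(iv). For (ii)$\Rightarrow$(iii) a Ramsey-type thinning of the cycles of an $\infty$-bundle, together with a re-indexing of its rays, stabilises the combinatorial pattern in which the cycles meet the rays, yielding a consistent $\infty$-bundle. For (iii)$\Rightarrow$(iv) one then builds a subdivision of the Dartboard by hand: the (truncated) cycles of a consistent $\infty$-bundle serve as the rings, and the sub-arcs of the rays between consecutive rings serve as the radial segments, after one has dealt --- by local re-routing and by passing to subsequences --- with the possibilities that a ray leaves and re-enters an annular region or that distinct rays interleave along a ring. Consistency is precisely what makes the radial pieces fit together into genuine spokes, so this step, while notation-heavy, is essentially bookkeeping once (ii)$\Rightarrow$(iii) is in place. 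Finally, for (vi)$\Rightarrow$(i), all rays of $\mathcal{R}$ lie in a common end $\omega$ because they are equivalent, and for each $n$ one constructs an $n$-bundle inside $\omega$ by selecting $n$ rays of $\mathcal{R}$ and weaving around them infinitely many disjoint cycles, each meeting each selected ray in a single arc; the hypothesis that deleting a single ray of $\mathcal{R}$ leaves the remaining ones equivalent is exactly what supplies the re-routings needed to keep these intersections connected.

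I expect the main obstacle to be the compactness step (i)$\Rightarrow$(ii): it is the genuine generalisation of Halin's Theorem~\ref{inf_rays}, and unlike that theorem it must carry the cycle structure of the bundles through an infinite limiting process. The construction (iii)$\Rightarrow$(iv) is the second most delicate point, although by then the essential combinatorial difficulty has been absorbed into arranging consistency in (ii)$\Rightarrow$(iii).
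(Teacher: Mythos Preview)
Your chain of implications matches the paper's, and your handling of (ii)$\Rightarrow$(iii) through (v)$\Rightarrow$(vi) is essentially what the paper does. Two steps diverge, and one has a real gap.

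The genuine gap is (vi)$\Rightarrow$(i). Your plan is to select $n$ rays of $\mathcal{R}$ and ``weave'' infinitely many disjoint cycles around them, with the deletion hypothesis supplying re-routings. But that hypothesis only lets you avoid \emph{one} ray at a time, whereas an embracing cycle for an $n$-bundle must be assembled from $R_i$--$R_j$ arcs that avoid \emph{all} the other selected rays simultaneously, and you also need some cyclic arrangement of the rays to thread the cycle through. Nothing in your sketch produces either. The paper's device is to select not $n$ but $N\gg n$ rays, with $N$ chosen as in Lemma~\ref{C_k_or_K_2,k}, and to form an auxiliary finite graph $H$ on these rays in which $R_iR_j$ is an edge precisely when there are infinitely many disjoint $R_i$--$R_j$ paths avoiding all other selected rays. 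The deletion hypothesis is then used, via a short pigeonhole argument, to show that $H$ is $2$-connected; Lemma~\ref{C_k_or_K_2,k} yields a subdivided $C_n$ or $K_{2,n}$ inside $H$, and either structure is converted into an $n$-bundle in $G$ (in the $K_{2,n}$ case the two rays of high degree act as ``distributor'' rays along which the embracing cycles are strung). This auxiliary-graph step, together with the appeal to Lemma~\ref{C_k_or_K_2,k}, is the missing idea.

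For (i)$\Rightarrow$(ii) your approach is different from the paper's rather than wrong, but your sketch is too optimistic. The paper does \emph{not} run a compactness argument over a finite exhaustion; instead it proves a linking lemma (Lemma~\ref{ray_linking_lemma}, Corollary~\ref{inf_joined}) showing that any $n$-bundle in $\omega$ is \emph{infinitely joined} to any larger $k$-bundle in $\omega$, and then builds the $\infty$-bundle greedily: at stage $i$ one has a finite configuration $H_i$ of $i$ nested cycles and $i$ paths with tails of some $B_n$ hanging off, joins $B_n$ to some $B_k$ with $k>n$ outside $H_i$, takes a fresh embracing cycle of $B_k$ as $C_{i+1}$, and extends. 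In your compactness outline it is unclear how finite pieces of the \emph{a priori unrelated} bundles $B_n$ are to ``agree on each $V_k$'', since they share no rays; the joining lemma is exactly what manufactures this coherence, and you have not identified an analogue of it.
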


\begin{corollary}\label{cor_main}
A graph has no $\mathbb{Z} \times \mathbb{Z}$ grid minor if and only if it has a bundle-narrow tree-decomposition into finite parts distinguishing all ends.
\end{corollary}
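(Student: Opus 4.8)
The plan is to prove the two implications of the corollary separately, in both cases using Theorem~\ref{main_thm} to translate ``$G$ contains a $\mathbb{Z}\times\mathbb{Z}$ grid minor'' into ``some end of $G$ contains an $\infty$-bundle, equivalently arbitrarily large $n$-bundles''. This makes the argument run parallel to the proof of Theorem~\ref{NxZ_td}, with \emph{finite adhesion} replaced by \emph{bundle-narrow}.

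For the direction ``suitable tree-decomposition $\Rightarrow$ no grid minor'', I would argue by contradiction. Let $(T,(V_t)_{t\in V(T)})$ be a bundle-narrow tree-decomposition of $G$ into finite parts distinguishing all ends, and suppose $G$ has a $\mathbb{Z}\times\mathbb{Z}$ grid minor. By Theorem~\ref{main_thm} there is an end $\omega$ of $G$ with $n$-bundles $B_n\subseteq\omega$ for every $n\in\mathbb{N}$. Since all parts of the decomposition are finite, no end of $G$ can be captured by a node of $T$, so $\omega$ is captured by an end of $T$, i.e.\ by a ray $\sigma=t_0t_1t_2\dots$ of $T$. The key observation is that every ray in $\omega$ has a tail lying in the union of the parts along $\sigma$: a tail running to infinity in a branch of $T$ other than $\sigma$, or in a subtree hanging off $\sigma$ whose union of parts is infinite, would mean $\omega$ is captured by some other node or end of $T$, contradicting that the decomposition distinguishes ends and that $\omega$ is captured by $\sigma$. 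Truncating the rays of each $B_n$ to such tails and discarding the finitely many cycles that still meet the removed initial segments then yields arbitrarily large bundles inside the region of $G$ associated with $\sigma$, contradicting bundle-narrowness along $\sigma$. The one point needing care is that the truncated configurations are again genuine $n$-bundles; here it is convenient to take the $B_n$ consistent, as permitted by Theorem~\ref{main_thm}(iii).

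For the direction ``no grid minor $\Rightarrow$ suitable tree-decomposition'', Theorem~\ref{main_thm} first tells us that $G$ has no $\infty$-bundle, so every end $\omega$ of $G$ carries a finite bound $b(\omega)$ on the sizes of the bundles contained in it. I would then argue that it suffices to construct \emph{any} tree-decomposition of $G$ into finite parts distinguishing all ends: such a decomposition is readily seen to be bundle-narrow, since along any ray $\sigma$ of its tree every ray following $\sigma$ to infinity lies in the unique end $\omega_\sigma$ captured by $\sigma$ (if such an end exists at all), so every bundle along $\sigma$ has size at most $b(\omega_\sigma)$, whereas if $\sigma$ captures no end then no ray, hence no bundle, follows $\sigma$. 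Thus the entire weight of the corollary rests on producing a tree-decomposition of a $\mathbb{Z}\times\mathbb{Z}$-grid-minor-free graph into finite parts that distinguishes all ends.

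That construction is the step I expect to be the main obstacle. For graphs without an $\mathbb{N}\times\mathbb{Z}$ grid minor it is precisely Theorem~\ref{NxZ_td}, where the ends are isolated one by one across single finite separators because each has finite degree. Under the weaker hypothesis the ends may have infinite degree (as in the $\mathbb{N}\times\mathbb{Z}$ grid itself), so no single finite separator can isolate such an end; one must instead approach it through an infinite nested sequence of finite separators --- a ray of the decomposition tree --- and then amalgamate all these separator systems into one tree-decomposition into finite parts that still distinguishes all ends. Showing that this can always be done --- i.e.\ that the absence of a $\mathbb{Z}\times\mathbb{Z}$ grid minor (entering only through the finite bounds $b(\omega)$ and the absence of any ``uncountably cohesive'' part of $G$) keeps the process from getting stuck and makes the limit a genuine tree-decomposition --- is where the real work lies.
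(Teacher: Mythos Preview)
Your backward direction (tree-decomposition $\Rightarrow$ no grid minor) is essentially the paper's argument, though you overcomplicate it. By the definition of \emph{bundle-narrow}, the forbidden $k$-bundle is a bundle in $G$ whose \emph{rays} meet all but finitely many $V_{t_i}$; the embracing cycles may sit anywhere in $G$. Hence no truncation is needed: once you know that every $\omega$-ray induces a ray of $T$ sharing a tail with $\sigma$ (which follows already from finiteness of the parts, since equivalent rays cannot be separated by a finite adhesion set), the original bundles $B_n$ themselves witness the failure of bundle-narrowness along $\sigma$. Your worry about whether truncated configurations remain bundles, and the appeal to consistency, are both unnecessary.

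The genuine gap is in the forward direction. You correctly reduce the problem to constructing \emph{some} tree-decomposition of $G$ into finite parts distinguishing all ends, and correctly observe that any such decomposition is automatically bundle-narrow via the bounds $b(\omega)$. But you then declare this construction to be ``where the real work lies'' and do not carry it out. In fact it is a two-line argument using tools already set up in the paper: since $K_{\aleph_0}$ contains the $\mathbb{Z}\times\mathbb{Z}$ grid as a subgraph, a graph without a $\mathbb{Z}\times\mathbb{Z}$ grid minor contains no subdivision of $K_{\aleph_0}$ either; hence by Halin's Theorem~\ref{K_aleph_NST} it has a normal spanning tree, and Lemma~\ref{NST_gives_td} then hands you the desired tree-decomposition directly. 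The elaborate amalgamation of nested separator systems you anticipate is not needed, and the infinite-degree ends you worry about cause no difficulty for the normal-spanning-tree route.
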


The rest of the paper is organized as follows.
In Section~2 we state the definitions and notation that we need in this paper.
Furthermore, we collect known results which we shall use in the proof of the main theorem and its corollary.
The proofs of Theorem~\ref{main_thm} and of Corollary~\ref{cor_main} are the content of Section~3.

\section{Preliminaries}

In this section, we list important definitions, notation and already known results needed for the rest of the paper.
In general, we will use the graph theoretical notation of \cite{diestel_buch} in this paper.
For basic facts about graph theory, especially for infinite graphs, the reader is referred to \cite{diestel_buch} as well.

All graphs we consider in this paper are undirected and simple.
Furthermore, we do not assume a graph to be finite unless we state this explicitly.

For $n \geq 3$ we write $C_{n}$ for the cycle with $n$ vertices and for $m, k \in \mathbb{N}$ we denote by $K_{m, k}$ the complete bipartite graph with $m$ vertices in one class and $k$ in the other.

We define the $\mathbb{N} \times \mathbb{N}$ grid as the graph whose vertex set is $\mathbb{N} \times \mathbb{N}$ and two vertices are adjacent if and only if they differ in only one component by precisely $1$.
The $\mathbb{Z} \times \mathbb{Z}$ grid and the $\mathbb{N} \times \mathbb{Z}$ grid are defined in the same way but with vertex set $\mathbb{Z} \times \mathbb{Z}$ or $\mathbb{N} \times \mathbb{Z}$, respectively, instead of $\mathbb{N} \times \mathbb{N}$.

The graph $H^\infty$ (see Fig.~\ref{H_inf}) is the graph obtained in the following way:
First take the $\mathbb{N} \times \mathbb{N}$ grid and delete the vertex $(0, 0)$ together with all vertices $(n, m)$ with $n > m$.
Furthermore, delete all edges $(n, m)(n+1, m)$ when $n$ and $m$ have equal parity.

Now let us make some remarks on the graph $H^\infty$.
It follows from the definition of $H^\infty$ that it is a subgraph of the $\mathbb{N} \times \mathbb{N}$ grid.
However, $H^\infty$ is still rich enough to contain the $\mathbb{N} \times \mathbb{N}$ grid as a minor.
This fact is not so hard to prove and we omit a proof of it.
Furthermore, every vertex in $H^\infty$ has either degree $2$ or $3$.
So having $H^\infty$ as a minor in a graph is equivalent to containing a subdivision of it.
So we can conclude that a graph has the $\mathbb{N} \times \mathbb{N}$ grid as a minor if and only if it contains a subdivision of $H^\infty$.

\begin{figure}[htbp]
\centering
\includegraphics{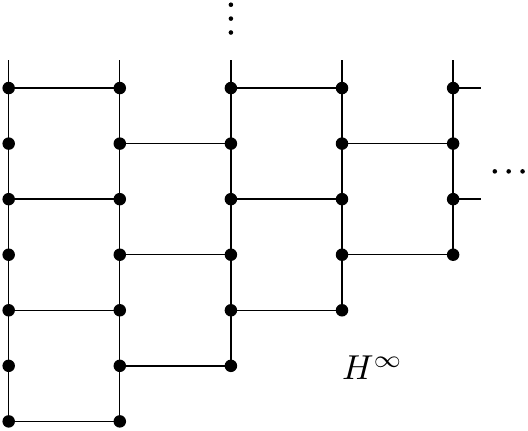}
\caption{The graph $H^\infty$.}
\label{H_inf}
\end{figure}

A one-way infinite path in a graph $G$ is called a \textit{ray} of $G$.
An equivalence relation can be defined on the set of all rays of $G$ by saying that two rays in $G$ are equivalent if they cannot be separated by finitely many vertices.
It is straightforward to check that this relation really defines an equivalence relation.
The corresponding equivalence classes of rays with respect to this relation are called the \textit{ends} of $G$.

A ray which is contained in an end $\omega$ of the graph is called an $\omega$-\textit{ray}.
The vertex of degree $1$ in a ray is called the \textit{startvertex} of the ray.
A subgraph of a ray $R$ which is itself a ray is called a \textit{tail} of $R$.

For ${n \in \mathbb{N}}$ a set of $n$ disjoint rays is called an $n$-\textit{bundle} if there are infinitely many disjoint cycles each of which intersects with each ray, but only in a path.
For every $n$-bundle, the cycles which witness that the $n$ disjoint rays are an $n$-bundle can be chosen in such a way that they all run through the rays in the same cyclic order.
We call such a set of cycles the \textit{embracing cycles} of the $n$-bundle.
Note that the definition of an $n$-bundle implies that an $n$-bundle is always a subset of one end.
For the rest of the paper, we will implicitly assume by stating that ${R_1, \ldots, R_n}$ are the rays of an $n$-bundle that the embracing cycles traverse them in order $R_1, \ldots, R_{n-1}, R_{n}$.

An infinite set of disjoint rays $\lbrace R_1, R_2, \ldots \rbrace$ is called an $\infty$-\textit{bundle} if there are disjoint cycles $C_i$ and natural numbers $c_i$ for every $i \in \mathbb{N}$ such that for all $i, j \in \mathbb{N}$ with $i < j$ we have $c_i < c_j$ and $C_i$ intersects with each $R_{\ell}$ for $\ell \leq c_i$, but only in a path.
Furthermore, we call an $\infty$-bundle \textit{consistent} if for all $i, j \in \mathbb{N}$ with $i < j$ the cycles $C_i$ and $C_j$ run through the rays $R_1, \ldots, R_{c_i}$ in the same cyclic order.
As for $n$-bundles we call the cycles $C_i$ embracing cycles.
Also note that the rays of an $\infty$-bundle are in the same end.

Now consider an $n$-bundle with rays ${R_1, \ldots, R_n}$ and a $k$-bundle whose rays are ${R'_1, \ldots, R'_k}$ where $n \leq k$.
We say that the $n$-bundle can be \textit{joined} to the $k$-bundle if there are vertices ${r_i \in V(R_i)}$ for every ${i \in \lbrace 1, \ldots, n \rbrace}$ and ${r'_j \in V(R'_j)}$ for every ${j \in \lbrace 1, \ldots, k \rbrace}$ together with $n$ pairwise disjoint ${r_i}$--${r'_{\sigma(i)}}$ paths, for some injection ${\sigma : \lbrace 1, \ldots, n \rbrace \longrightarrow \lbrace 1, \ldots, k \rbrace}$, each of which intersects ${\bigcup_i R_ir_i \cup \bigcup_j r'_jR'_j}$ only in its endvertices.
The involved paths are called \textit{joining paths}.

Finally, we call an $n$-bundle \textit{infinitely} \textit{joined} to a $k$-bundle if for every finite vertex set $S$ of the graph the $n$-bundle can be joined to the $k$-bundle such that the joining paths do not intersect with $S$.

In order to define an archetypal example of a graph containing an $\infty$-bundle, we have to construct a sequence ${(G_i)_{i \in \mathbb{N}}}$ of graphs first.
For this we need, furthermore, the function ${f: \mathbb{N} \longrightarrow \mathbb{N}}$ which is defined as follows:
\[ {f(i) =
\begin{cases}
4, & \text{if } i = 1 \\
2^{i} \cdot 3, & \text{if } i \geq 2.
\end{cases}} \]
Now we state the recursive definition of the graphs $G_i$.
Let $G_1$ be a $C_{4}$.
Next suppose $G_i$ has already been defined.
The construction yields that there is a unique cycle $D_i$ in $G_i$ which is isomorphic to $C_{f(i)}$ and contains all vertices that have degree $2$ in $G_i$, of which there are ${g(i) = \frac{1}{3} \cdot f(i+1)}$ many.
Enumerate these vertices according to the cyclic order in which they appear on $D_i$.
Now we obtain $G_{i+1}$ by taking $G_{i}$ together with a disjoint copy of $C_{g(i)}$ whose vertices we enumerate according to the cyclic order of this cycle too, adding an edge between the $j$-th vertex of $D_i$ and the $j$-th of $C_{g(i)}$ for each $j$ and subdividing each edge of $C_{g(i)}$ twice.
Finally, we define the \textit{Dartboard} (see Fig.~\ref{dartboard}) as ${\bigcup_i G_i}$.

\begin{figure}[htbp]
\centering
\includegraphics{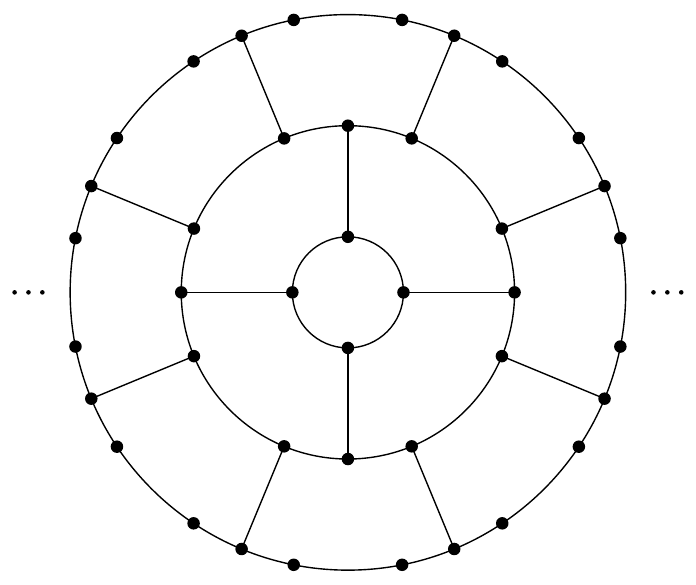}
\caption{The Dartboard.}
\label{dartboard}
\end{figure}

We continue with some remarks about normal spanning trees and tree-decom-
positions.
Let $T$ be a tree with root $r$ and let $t \in V(T)$.
Then we write $\lfloor t \rfloor$ for the up-closure of $t$ with respect to the tree-order of $T$ with root $r$.
Similarly, we write $\lceil t \rceil$ for the down-closure of $t$.

A rooted spanning tree of a graph is \textit{normal} if the endvertices of every edge in the graph are comparable in the tree-order.

The following theorem of Halin gives a very useful sufficient condition for the existence of a normal spanning tree.

\begin{theorem}\label{K_aleph_NST}\cite[Thm.~10.1]{halin_nst}
Every connected graph which does not contain a subdivision of a $K_{\aleph_0}$ has a normal spanning tree.
\end{theorem}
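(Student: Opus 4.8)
The plan is to prove the contrapositive: if $G$ is connected and has no normal spanning tree, then $G$ contains a subdivision of $K_{\aleph_0}$. Call a set $U\subseteq V(G)$ \emph{dispersed} if every ray of $G$ can be separated from $U$ by finitely many vertices; note that finite sets are dispersed and that the union of two dispersed sets is again dispersed. I would first isolate the characterisation, due to Jung, that a connected graph has a normal spanning tree if and only if its vertex set is a countable union of dispersed sets. The direction I actually need is that a countable union of dispersed sets admits a normal spanning tree, and I would prove it by a transfinite recursion that builds the tree one vertex at a time: whenever the current normal tree $T'$ fails to span, one enters a component $C$ of $G-V(T')$ and attaches, just below the appropriate vertex of $T'$, a vertex of $C$ of minimal ``dispersed index'' among those still available — dispersedness being exactly what guarantees that no ray of $G$ can obstruct the recursion forever, so that every vertex is reached. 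Granting this, it suffices to show that if $V(G)$ is not a countable union of dispersed sets, then $G$ contains a subdivision of $K_{\aleph_0}$.

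So assume $V(G)$ is not a countable union of dispersed sets. I would construct the branch vertices $b_1,b_2,\dots$ of a subdivision of $K_{\aleph_0}$ together with the connecting paths by recursion. At a stage where finitely many branch vertices $b_1,\dots,b_k$ and a finite independent system of paths among them have been built, let $S$ be the (finite) set of vertices used so far; since removing a finite set cannot turn $V(G)$ into a countable union of dispersed sets, the graph ``beyond $S$'' is still not coverable in this way, which I would use to find a vertex $b_{k+1}\notin S$ that sends infinitely many paths to $S\cup\{b_1,\dots,b_k\}$, disjoint except at $b_{k+1}$ and otherwise avoiding $S$. A fan/compactness argument inside a suitable connected piece in which no finite separator isolates the relevant infinite vertex set then lets one distribute and reroute these into independent paths running from $b_{k+1}$ to each previous branch vertex. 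Carrying this out compatibly with all earlier choices so that the whole path system stays internally disjoint, and finally thinning the branch vertices to an infinite subset that is genuinely pairwise joined by internally disjoint paths, produces the desired subdivision of $K_{\aleph_0}$.

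The main obstacle is precisely this last construction: at each step one must obtain not merely a large family of paths leaving the new branch vertex but a family that, together with everything chosen before, is \emph{pairwise} internally disjoint across all branch vertices. In a finite graph this would follow at once from Menger's theorem; in the infinite setting one has to exploit the failure of dispersed-coverability carefully — typically by first passing to a connected subgraph in which every finite separation leaves a side still seeing infinitely many of the relevant vertices through independent paths — and the bookkeeping over the (possibly transfinite) recursion, in particular the limit stages when $G$ is uncountable, is where the delicacy lies. An alternative route that sidesteps Jung's characterisation is to take, by Zorn's lemma, a normal tree $T$ in $G$ maximal under the order in which one normal tree extends another only upward; if $G$ has no normal spanning tree then $T$ does not span $G$, and for a component $C$ of $G-V(T)$ the neighbourhood $N_G(C)$ in $T$ has no maximum element, since otherwise one could attach a vertex of $C$ below it and contradict maximality. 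One then argues that this perpetual branching of the neighbourhoods of the unabsorbed components, followed along the rays of $T$, must house a subdivision of $K_{\aleph_0}$ — but making that extraction explicit is again the crux, and essentially the same difficulty as above.
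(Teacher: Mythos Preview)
The paper does not prove this theorem at all: it is quoted as a known result of Halin (the reference \cite[Thm.~10.1]{halin_nst}) and is used only as a black box in the proof of Corollary~\ref{cor_main}. There is therefore no ``paper's own proof'' to compare your proposal against.

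For what it is worth, your outline follows the classical route --- Jung's characterisation of graphs with normal spanning trees via countable unions of dispersed sets, and then the extraction of a $TK_{\aleph_0}$ when that characterisation fails --- and this is indeed how the result is usually established in the literature. But as you yourself concede, the heart of the argument is precisely the step you leave unfinished: showing that failure of the dispersed-set covering actually yields the required system of internally disjoint paths. Your sketch of that step (``a fan/compactness argument \dots\ lets one distribute and reroute'') is not yet a proof, and the alternative via a maximal normal tree ends at the same unresolved point. Since the present paper neither needs nor supplies any of this, your proposal is surplus to requirements here; if you wish to include a proof, you would have to complete the missing construction rather than gesture at it.
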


Next let us recall the definition of a tree-decomposition.
Let $G$ be a graph, $T$ be a tree and $(V_t)_{t \in V(T)}$ be a sequence of vertex sets of $G$.
We call $(T, (V_t)_{t \in V(T)})$ a \textit{tree-decomposition} of $G$ if the following three properties hold:

\begin{enumerate}
\item $V(G) = \bigcup_{t \in V(T)}V_t$.
\item For each edge $vw$ of $G$ there is a $t \in V(T)$ such that $v, w \in V_t$.
\item For all $t_1, t_2, t_3 \in V(T)$ such that $t_2$ lies on the unique $t_1$--$t_3$ path in $T$ the inclusion $V_{t_1} \cap V_{t_3} \subseteq V_{t_2}$ is true.
\end{enumerate}

We call a tree-decomposition $(T, (V_t)_{t \in V(T)})$ \textit{rooted} if the corresponding tree $T$ is rooted.
For a rooted tree-decomposition $(T, (V_t)_{t \in V(T)})$ whose tree $T$ has root $r$, we write $(T, r, (V_t)_{t \in V(T)})$.

A graph has a tree-decomposition \textit{into finite parts} if there is a tree-decomposition $(T, (V_t)_{t \in V(T)})$ of the graph with $V_t$ finite for every $t \in V(T)$.

We say that a tree-decomposition $(T, (V_t)_{t \in V(T)})$ of a graph $G$ into finite parts \textit{distinguishes} all ends of $G$ if for every ray $t_1 t_2 \ldots$ of $T$ all rays of $G$ that intersect all but finitely many $V_{t_i}$ are equivalent.
Since all parts of such a tree-decomposition are finite, there is an injection from the set of ends of $G$ to the set of ends of $T$.

An easy observation shows that we always get a tree-decomposition into finite parts distinguishing all ends as soon as we have a normal spanning tree.

\begin{lemma}\label{NST_gives_td}
Every graph with a normal spanning tree has a tree-decomposition into finite parts distinguishing all ends.
\end{lemma}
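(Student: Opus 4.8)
The plan is to make the standard construction that turns a normal spanning tree into a tree-decomposition, and then to verify that the resulting decomposition has the two required properties: finite parts and distinguishing all ends. So let $T$ be a normal spanning tree of the given graph $G$ with root $r$; we may assume $G$ is connected, since otherwise we apply the argument to each component and join the resulting decomposition trees by an edge to a new part (or simply work componentwise, as ends live in single components). For each vertex $t \in V(T)$ define $V_t := \lceil t \rceil$, the down-closure of $t$ in the tree-order, i.e.\ the set of vertices on the path in $T$ from $r$ to $t$ (inclusive). We take as decomposition tree $T$ itself, so the proposed tree-decomposition is $(T, r, (V_t)_{t \in V(T)})$.

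First I would check that $(T,(V_t)_{t\in V(T)})$ is indeed a tree-decomposition. Property~(1) is immediate since $t \in V_t$. For property~(2), let $vw \in E(G)$; by normality $v$ and $w$ are comparable, say $v \leq w$, and then $v$ lies on the $r$--$w$ path in $T$, so $v,w \in V_w$. For property~(3), if $t_2$ lies on the $t_1$--$t_3$ path in $T$ then $\lceil t_1 \rceil \cap \lceil t_3 \rceil = \lceil t_1 \wedge t_3 \rceil$, where $t_1 \wedge t_3$ is the meet of $t_1$ and $t_3$ in the tree-order; and $t_1 \wedge t_3 \leq t_2$, so this intersection is contained in $\lceil t_2 \rceil = V_{t_2}$. (One small case check: if $t_1$ and $t_3$ are already comparable the meet is the smaller of the two, and the same inclusion holds.) Next, each $V_t = \lceil t \rceil$ is the vertex set of a finite path in $T$, hence finite, so the decomposition is into finite parts.

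Finally I would verify that the decomposition distinguishes all ends. Let $t_1 t_2 \ldots$ be a ray of $T$, and suppose $R$ and $R'$ are rays of $G$ that each meet all but finitely many of the $V_{t_i}$; I must show $R$ and $R'$ are equivalent in $G$. Fix any finite vertex set $S \subseteq V(G)$; it suffices to find an $R$--$R'$ path in $G-S$, or more simply to show $R$ and $R'$ cannot be separated by $S$. Choose $i$ large enough that $t_i \notin S$ and that both $R$ and $R'$ meet $V_{t_i}$; since $t_1 t_2\ldots$ is a ray of $T$, the set $\lfloor t_i \rfloor$ is the vertex set of the component of $T - t_{i-1}$ containing $t_i$, and by normality $G - V_{t_{i-1}}$ has $\lfloor t_i\rfloor \setminus \lceil t_{i-1}\rceil$ (equivalently the relevant up-sets) as a union of components; the key point is the standard fact that the up-closures $\lfloor t \rfloor$ of vertices $t$ along the ray form a nested sequence of connected subgraphs of $G$ whose intersection is empty and across which any ray in the corresponding end must eventually pass through $V_{t_i}$. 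Concretely: for each $i$ the set $V_{t_i}$ separates $G$ into the ``inside'' $\lceil t_i \rceil$ and the finitely many components of $G - V_{t_i}$, one of which, call it $C_i$, contains the subtree $\lfloor t_{i+1} \rfloor$; since $R$ meets $V_{t_j}$ for arbitrarily large $j$ and the $V_{t_j}$ are eventually contained in $C_i \cup V_{t_i}$, a tail of $R$ lies in $C_i$, and likewise a tail of $R'$ lies in $C_i$. As $C_i$ is connected and disjoint from $S$ once $i$ is large, we get an $R$--$R'$ path avoiding $S$. Hence $R$ and $R'$ are equivalent, as desired.

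The routine bookkeeping is the tree-order identities for intersections and up/down-closures, which I have only sketched. The main obstacle — and the step worth writing out carefully — is the end-distinguishing property: one must argue precisely that a ray of $G$ hitting infinitely many $V_{t_i}$ along a ray of $T$ has a tail trapped inside the nested connected regions $C_i$ cut out by the separators $V_{t_i}$, using normality to guarantee that $V_{t_i}$ genuinely separates and that the subtree side stays connected in $G$. Everything else is a direct translation of Jung's classical construction and should be stated with a pointer to the analogous argument in \cite{diestel_buch}.
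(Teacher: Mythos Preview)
Your proposal is essentially the paper's proof: you use the identical construction $V_t=\lceil t\rceil$ on the normal spanning tree $T$, verify the three tree-decomposition axioms in the same way (via normality and the meet identity $\lceil t_1\rceil\cap\lceil t_3\rceil=\lceil t_1\wedge t_3\rceil$), and note finiteness of each $\lceil t\rceil$. For the end-distinguishing step you argue via the connected components $C_i$ of $G-V_{t_i}$ containing $\lfloor t_{i+1}\rfloor$, whereas the paper phrases the same idea as connectedness of the segments $\bigcup_{i\le k}\lceil t_i\rceil\setminus\bigcup_{i\le j}\lceil t_i\rceil$; these are the same standard normal-spanning-tree facts, just packaged slightly differently.
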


\begin{proof}
Let $T$ be a normal spanning tree of a graph $G$ with root $r$.
Then we define the desired tree-decomposition as $(T, r, (\lceil t \rceil)_{t \in V(T)})$.
Let us briefly check that this really defines a tree-decomposition.
It is obvious that each vertex $v$ lies in some part, for example in $\lceil v \rceil$.
Since $T$ is normal, we know that the endvertices of every edge are comparable and must therefore lie in some common part.
Note for the remaining property that for all $t_1, t_3 \in V(T)$ we have $\lceil t_1 \rceil \cap \lceil t_3 \rceil = \lceil t \rceil$ where $t$ is the greatest vertex in the tree-order which is still comparable with $t_1$ and $t_3$.
Since every vertex $t_2$ on the $t_1$--$t_3$ path in $T$ is greater than $t$, we get that $\lceil t_1 \rceil \cap \lceil t_3 \rceil = \lceil t \rceil \subseteq \lceil t_2 \rceil$.

The definition of $(T, r, (\lceil t \rceil)_{t \in V(T)})$ ensures that every part is finite.
So it remains to check that this tree-decomposition distinguishes all ends.
Let us fix a ray ${r = t_1 t_2 \ldots}$ of $T$ and suppose there are two rays in $G$ which intersect with all but finitely many parts $\lceil t_i \rceil$.
Since $\bigcup_{i \leq k} \lceil t_i \rceil \setminus \bigcup_{i \leq j} \lceil t_i \rceil$ always induces a connected subgraph for $k > j$, we get that the two rays cannot be separated by finitely many vertices, which means they are equivalent.
\end{proof}

A tree-decomposition $(T, (V_t)_{t \in V(T)})$ has \textit{finite adhesion} if for every $t \in V(T)$ there is an integer $n \geq 0$ such that $|V_s \cap V_t| \leq n$ for every $s$ being adjacent with $t$ in $T$ and additionally for every ray $t_1 t_2 \ldots$ of $T$ there is an integer $k$ such that $|V_{t_i} \cap V_{t_{i+1}}| \leq k$ holds for infinitely many $i \in \mathbb{N}$.

By Theorem~\ref{NxZ_td} a tree-decomposition of a graph $G$ into finite parts and with finite adhesion is a witness that $G$ does not contain an $\mathbb{N} \times \mathbb{Z}$ grid minor.
Beside the requirement that each part shall be too small to contain a grid minor, which is done by requiring all parts to be finite, the possibility to distribute a grid minor along a branch in the tree-decomposition is prevented by making all branches too narrow for arbitrarily many rays to run through them.
The latter goal is achieved by requiring the tree-decomposition to have finite adhesion.

setting the adhesion parameter of the tree-decomposition to be finite.

Similar to the definition before we now introduce a property that prevents from distributing a $\mathbb{Z} \times \mathbb{Z}$ grid minor along a whole branch in a tree-decomposition.
Unfortunately, verifying this property needs a closer look at the graph and the bundles in it, in contrast to the more abstract property of finite adhesion, which involves only the tree and parts of the decomposition.

A tree-decomposition $(T, (V_t)_{t \in V(T)})$ is called \textit{bundle-narrow} if for every ray $t_1 t_2 \ldots$ of $T$ there is an integer $k \geq 1$ such that there is no $k$-bundle in $G$ whose rays intersect all but finitely many $V_{t_i}$.

We close this section with a well-known result about $2$-connected graphs. We will need this lemma in the proof of the main theorem.

\begin{lemma}\label{C_k_or_K_2,k}\cite[Prop.~9.4.2]{diestel_buch}
For every positive integer $k$, there exists an integer $n$ such that every $2$-connected graph on at least $n$ vertices contains a subgraph isomorphic to a subdivision of either $K_{2, k}$ or a cycle of length $k$.
\end{lemma}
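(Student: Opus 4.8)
The plan is to prove the contrapositive in a quantitative form: I would exhibit an $n = n(k)$ such that a $2$-connected graph $G$ containing neither a cycle of length at least $k$ nor a subdivision of $K_{2,k}$ has at most $n(k)$ vertices. Since a cycle of length at least $k$ is exactly a subdivision of $C_k$, this gives the lemma. So suppose $G$ is $2$-connected, has circumference at most $k-1$, and contains no subdivision of $K_{2,k}$; as we may assume $|V(G)| \geq 3$, the graph $G$ contains a cycle, and I fix a longest one, $C$, noting $|V(C)| \leq k-1$.

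The first step is to bound the maximum degree of $G$: I would prove that there is $d = d(k)$ such that every $2$-connected graph with a vertex of degree at least $d$ contains a cycle of length at least $k$ or a subdivision of $K_{2,k}$; applied to $G$ this forces $\Delta(G) < d$. For the claim, given a vertex $v$ of degree at least $d$, I would take an inclusion-minimal tree $S \subseteq G - v$ containing $N(v)$ (it exists since $G - v$ is connected) and observe that every leaf of $S$ lies in $N(v)$, as otherwise it could be deleted from $S$. If some $u \in V(S)$ has at least $k$ neighbours in $S$, then each of the at least $k$ components of $S - u$ contains a leaf of $S$, hence a neighbour of $v$; joining $u$ through $S$ to $k$ such leaves and then along their edges to $v$ produces $k$ internally disjoint $u$--$v$ paths of length at least $2$, i.e.\ a subdivision of $K_{2,k}$. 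Otherwise $S$ has maximum degree less than $k$, and, being a tree on at least $|N(v)| \geq d$ vertices, it contains a path whose length goes to infinity with $d$; the two endpoints of a longest path of a tree are leaves, hence lie in $N(v)$, so adding $v$ closes this path into a cycle of length at least $k$ provided $d$ was chosen large enough.

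The second step is a ``shallowness'' observation for $G$ itself: I would show that every vertex lies close to $C$. For $v \notin V(C)$, the fan version of Menger's theorem provides two paths from $v$ to $V(C)$ meeting only in $v$, ending in distinct vertices $a, b$ of $C$, and otherwise disjoint from $C$; their union is an $a$--$b$ path meeting $C$ only in $a$ and $b$. Pasting this path onto whichever of the two $a$--$b$ arcs of $C$ is longer yields a cycle, which by maximality of $C$ has length at most $|V(C)|$; hence the two paths together have length at most $|V(C)|/2 \leq (k-1)/2$, so $v$ has distance at most $\lfloor (k-1)/2 \rfloor$ from $V(C)$ in $G$. Thus $V(G)$ is covered by the balls of radius $\lfloor (k-1)/2 \rfloor$ about the at most $k-1$ vertices of $C$, and since $\Delta(G) < d$ each such ball has at most $2 d^{(k-1)/2}$ vertices; therefore $|V(G)| \leq 2(k-1)\, d^{(k-1)/2} =: n(k)$, as required.

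The step I expect to be the main obstacle is the maximum-degree bound of the first step: it is the only point at which a subdivision of $K_{2,k}$ actually has to be built, and it relies on choosing the right auxiliary object (an inclusion-minimal Steiner tree for $N(v)$) together with the dichotomy ``a high-degree vertex of that tree yields the $K_{2,k}$-subdivision, a low-degree tree yields a long cycle''. Once this is established, the fan argument and the ball-counting of the second step are routine.
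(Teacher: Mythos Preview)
The paper does not prove this lemma: it is quoted without proof as Proposition~9.4.2 from Diestel's textbook, so there is no in-paper argument to compare against. Your proposal is a correct self-contained proof. The two steps do exactly what is needed: the Steiner-tree dichotomy in Step~1 is sound (each component of $S-u$ really does contain a leaf of $S$, hence a neighbour of $v$, yielding the $K_{2,k}$-subdivision; and a bounded-degree tree on many vertices has large diameter, with the endpoints of a longest path being leaves), and the detour/ball-counting argument in Step~2 is the standard way to finish once $\Delta(G)$ is bounded. This is in fact close to how Diestel proves it, so you have essentially reconstructed the textbook argument; minor cosmetic points such as the exact exponent in the ball bound or using $\lfloor (k-1)/2\rfloor$ instead of the sharper $\lfloor (k-1)/4\rfloor$ do not affect correctness.
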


\section{Proof of the main theorem}

Before we can prove Theorem~\ref{main_thm} we have to make some observations about bundles.
We start with the following lemma which tells us in our context of bundles that we can join a bundle to another one which is sufficiently large as soon as both are subsets of the same end.

\begin{lemma}\label{ray_linking_lemma}
Let $G$ be a graph, $\omega$ be an end of $G$ and $k \geq n \geq 1$ be integers.
Furthermore, let ${\mathcal{R} = \lbrace R_1, \ldots, R_n \rbrace}$ and ${\mathcal{R}' = \lbrace R'_1, \ldots, R'_k \rbrace}$ be sets of $n$ and $k$ pairwise disjoint $\omega$-rays, respectively.
Then there are vertices ${r'_i \in V(R'_i)}$ for each $i$ with ${1 \leq i \leq k}$ such that there are $n$ pairwise disjoint paths between the start vertices of the rays in $\mathcal{R}$ and the vertices ${r'_1, \ldots, r'_k}$ each of which intersects ${\bigcup_i r'_iR'_i}$ at most in $r'_i$.
\end{lemma}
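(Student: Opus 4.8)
The plan is to first produce $n$ pairwise disjoint paths from the startvertices of the rays in $\mathcal{R}$ to $n$ \emph{distinct} rays of $\mathcal{R}'$ whose interiors avoid $\bigcup_\ell V(R'_\ell)$ altogether, and only afterwards to choose the vertices $r'_i$. Indeed, suppose we have such paths $Q_1, \dots, Q_n$, reindexed so that $Q_i$ runs from $v_i$, the startvertex of $R_i$, to a vertex $y_i$, where the rays $R'_{j(1)}, \dots, R'_{j(n)}$ containing $y_1, \dots, y_n$ are pairwise distinct and each $Q_i$ meets $\bigcup_\ell V(R'_\ell)$ only in $y_i$. Then set $r'_{j(i)} := y_i$ for $i \in \{1, \dots, n\}$, and for every remaining index $j$ let $r'_j$ be the startvertex of $R'_j$; since the $Q_i$ miss all those rays, each $Q_i$ meets $\bigcup_\ell r'_\ell R'_\ell$ exactly in its endvertex $r'_{j(i)}$, which is what the lemma asks for. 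So everything reduces to finding $Q_1, \dots, Q_n$.

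For this I would first observe that $A := \{v_1, \dots, v_n\}$ cannot be separated from $\bigcup_\ell V(R'_\ell)$ by fewer than $n$ vertices: if $S$ is finite with $|S| < n$, then, as the rays $R_1, \dots, R_n$ are pairwise disjoint, at most $|S| < n$ of them meet $S$, so some $R_i$ avoids $S$ and therefore lies inside a single component $K$ of $G - S$; in particular $v_i \in K$, and since $R_i$ and $R'_1$ are equivalent the finite set $S$ does not separate them, so $K$ also contains a vertex of $R'_1$. Hence, by Menger's theorem in its form valid for arbitrary graphs when one side of the separation (here $A$) is finite, there exist $n$ pairwise disjoint $A$--$\bigcup_\ell V(R'_\ell)$ paths; reindexing, the $i$-th runs from $v_i$, and truncating it at its first vertex in $\bigcup_\ell V(R'_\ell)$ we obtain paths $Q_1, \dots, Q_n$ with $Q_i$ ending at some $y_i \in V(R'_{j(i)})$ and meeting $\bigcup_\ell V(R'_\ell)$ only in $y_i$.

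It remains to guarantee that the landing rays $R'_{j(1)}, \dots, R'_{j(n)}$ are pairwise distinct, and I expect this to be the main obstacle. The idea is an exchange argument: as long as two of the $Q_i$ land on a common ray, redirect one of them to a ray of $\mathcal{R}'$ not yet used, which should be possible because $k \ge n$ always leaves an unused ray available, and because all rays of $\mathcal{R}'$ lie in the single end $\omega$ -- so for the finite set $S := \bigcup_i V(Q_i)$ the unused ray, as well as a tail of each used ray beyond $S$, lies in the unique component $C(S,\omega)$ of $G - S$ containing a tail of every $\omega$-ray, and within $G - S$ these can be linked to one another. Rerouting one of the two conflicting paths along its used ray past the last vertex of $S$ on it and then across $C(S,\omega)$ to an unused ray, followed by a re-truncation of the new path at its first vertex in $\bigcup_\ell V(R'_\ell)$, should yield a new system of $n$ disjoint paths with the same properties but landing on one more distinct ray; after at most $n$ such steps we are done. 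The delicate points, which is where the real work lies, are to keep the rerouting subpath disjoint from all paths one wants to retain while still reaching a \emph{genuinely new} ray -- which may require first replacing the rays of $\mathcal{R}'$ by suitably combed disjoint tails of them, so that one can route between a prescribed pair of them inside the common end without meeting the others -- and to check that the re-truncation does not reintroduce a coincidence of landing rays.
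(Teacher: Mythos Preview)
Your reduction in the first paragraph is correct, and your Menger argument producing $n$ disjoint $A$--$\bigcup_\ell V(R'_\ell)$ paths truncated at their first hit is fine. The gap is entirely in the exchange step, and it is real, not just delicate. Suppose $Q_1$ and $Q_2$ both land on $R'_1$, at $y_1$ and $y_2$. Your rerouted path goes $v_2 \to y_2$ along $Q_2$, then along $R'_1$ past $S$, then through $C(S,\omega)$ to an unused $R'_m$. But the \emph{first} vertex of this new walk in $\bigcup_\ell V(R'_\ell)$ is still $y_2 \in R'_1$, so your ``re-truncation'' simply returns $Q_2$ and you have made no progress. If instead you keep the extended path without re-truncating, it passes through a segment of $R'_1$; then $r'_1$ would have to be chosen past that segment, but $Q_1$ ends at $y_1$, which lies \emph{before} that segment, so $Q_1$ no longer ends at $r'_1$. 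Extending $Q_1$ along $R'_1$ as well forces it to share that very segment with the rerouted $Q_2$. Your suggested fix of combing $\mathcal{R}'$ so that any prescribed pair can be linked avoiding the others does not hold in general: deleting some $R'_\ell$ may separate two of the remaining ones into different ends.

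The paper sidesteps this difficulty by reversing the order of your two steps: it chooses the points $r'_j$ \emph{first} and only then applies Menger. Concretely, it fixes in advance $kn^2$ pairwise disjoint paths, namely $n$ disjoint $R_i$--$R'_j$ paths for every pair $(i,j)$, lets $r'_j$ be the last vertex on $R'_j$ hit by this finite system (together with suitable initial segments of the $R_i$), and works inside the finite subgraph $H$ they span. Then Menger is applied between $\{v_1,\dots,v_n\}$ and the \emph{finite} set $\{r'_1,\dots,r'_k\}$: a putative separator $S$ of size $<n$ misses some $R_\ell r_\ell$, some $R'_p r'_p$, and one of the $n$ prepared $R_\ell$--$R'_p$ paths, yielding a connection avoiding $S$. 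Distinct landing rays are now automatic (disjoint paths cannot share an endvertex), and the tails $r'_jR'_j$ are avoided because the paths live in $H$ and $r'_j$ is by construction the last vertex of $R'_j$ in $H$. This extra upfront investment of the auxiliary path system is exactly what makes the distinct-landing-rays problem disappear.
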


\begin{proof}
We want to work within a finite subgraph $H$ of $G$ in which we find the desired paths.
To define $H$ we take a set $\mathcal{P}$ of $kn^2$ pairwise disjoint paths such that for every ${i \in \lbrace 1, \ldots, n \rbrace}$ and every ${j \in \lbrace 1, \ldots, k \rbrace}$ there are $n$ disjoint $R_{i}$--$R'_{j}$ paths in $\mathcal{P}$.
This is possible since all rays lie in the same end.
For all $i, j$ with ${1 \leq i \leq n}$ and ${1 \leq j \leq k}$ let $r_i$ be the last vertex on $R_i$ which is an endvertex of one of the $kn$ many $R_i$--$R'_j$ paths from $\mathcal{P}$ and $r'_j$ be the last vertex on the ray $R'_j$ which is hit by any path from $\mathcal{P}$ or any $R_ir_i$.
Next we define $H$ as follows:
\[ H:= G \Big[ \bigcup^n_{i=1} V(R_{i}r_{i}) \cup V \Big( \bigcup \mathcal{P} \Big) \cup \bigcup^k_{j=1} V(R'_{j}r'_{j}) \Big]. \]

We complete the proof of this lemma by showing that there are $n$ disjoint paths from the start vertices of the rays in $\mathcal{R}$ to $n$ vertices of the set ${\lbrace r'_1, \ldots, r'_k \rbrace}$ in the graph $H$. 
By Menger's Theorem it is sufficient to prove that there is no set $S$ of less than $n$ vertices which separates the start vertices of the rays in $\mathcal{R}$ from the vertices ${r'_1, \ldots, r'_k}$ in $H$.
Suppose for a contradiction that such a set $S$ exists in $H$.
Since $S$ contains less than $n$ vertices and the paths $R_{i}r_{i}$ are pairwise disjoint, we can find an index $\ell$ such that $R_{\ell}r_{\ell}$ does not contain any vertex of $S$.
The same is true for the paths $R'_{i}r'_{i}$ with some index $p$.
Furthermore, we can find an $R_{\ell}$--$R'_{p}$ path ${P_{\ell p} \in \mathcal{P}}$ that is disjoint from $S$ because $\mathcal{P}$ contains $n$ many $R_{\ell}$--$R'_{p}$ paths.
Now we have a contradiction because the union of the paths $R_{\ell}r_{\ell}$, $P_{\ell p}$ and $R'_{p}r'_{p}$ contains a path from the startvertex of the ray $R_{\ell}$ to $r'_{p}$ that avoids $S$.
\end{proof}

By iterating Lemma~\ref{ray_linking_lemma} and using the fact that there are only finitely many injections which correspond to path systems of joining paths from an $n$-bundle to a $k$-bundle for $k \geq n$, we obtain the following corollary.

\begin{corollary}\label{inf_joined}
Let $G$ be a graph and $\omega$ be an end of $G$.
Then an $n$-bundle $B_n$ is infinitely joined to a $k$-bundle $B_k$ if $k \geq n$ and $B_n, B_k \subseteq \omega$.
\end{corollary}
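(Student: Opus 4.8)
The plan is to fix an arbitrary finite vertex set $S\subseteq V(G)$ and to produce a joining of $B_n=\{R_1,\dots,R_n\}$ to $B_k=\{R'_1,\dots,R'_k\}$ whose joining paths avoid $S$; by the definition of \emph{infinitely joined} this is exactly what has to be shown. The only tool will be Lemma~\ref{ray_linking_lemma}, applied a single time --- but not in $G$ and not to the rays themselves: I would first enlarge $S$ to a finite \emph{guard set} $S^{*}\supseteq S$ and replace each ray by a suitable tail lying ``beyond'' $S^{*}$.

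Here is the guard set. For each $i$ let $v_i$ be the last vertex of $R_i$ on $S$ (and the start vertex of $R_i$ if $R_i$ avoids $S$), write $R_iv_i$ for the initial segment of $R_i$ up to $v_i$, and put $S^{*}:=S\cup\bigcup_{i=1}^{n}V(R_iv_i)$, which is finite. Since the rays of $B_n$ are pairwise disjoint one gets $V(R_i)\cap S^{*}=V(R_iv_i)$, so $v_i$ remains the last vertex of $R_i$ on $S^{*}$; let $\hat R_i$ be the tail of $R_i$ starting at the successor $\hat r_i$ of $v_i$, so that $\hat R_i$ avoids $S^{*}$ while the initial segment $R_i\hat r_i$ satisfies $V(R_i\hat r_i)\subseteq S^{*}\cup\{\hat r_i\}$. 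Let $\hat R'_j$ be likewise the tail of $R'_j$ beyond its last vertex on $S^{*}$. Then $\hat R_1,\dots,\hat R_n,\hat R'_1,\dots,\hat R'_k$ are rays of $G-S^{*}$, the $\hat R_i$ pairwise disjoint and the $\hat R'_j$ pairwise disjoint, and all of them lie in one end $\omega^{*}$ of $G-S^{*}$, because a finite separator of two of them in $G-S^{*}$ would, together with $S^{*}$, finitely separate two $\omega$-rays in $G$, which is impossible.

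Next I would apply Lemma~\ref{ray_linking_lemma} in $G-S^{*}$ to the end $\omega^{*}$ and the ray sets $\{\hat R_1,\dots,\hat R_n\}$ and $\{\hat R'_1,\dots,\hat R'_k\}$. It yields vertices $r'_j\in V(\hat R'_j)\subseteq V(R'_j)$ for $1\le j\le k$ and $n$ pairwise disjoint paths $P_1,\dots,P_n$ in $G-S^{*}$, where $P_i$ runs from $\hat r_i$ to $r'_{\sigma(i)}$ for some injection $\sigma$ and meets $\bigcup_m V(r'_m\hat R'_m)$ only in $r'_{\sigma(i)}$. Then, taking $r_i:=\hat r_i$ and the $r'_j$ just obtained, $P_1,\dots,P_n$ should witness that $B_n$ can be joined to $B_k$ by paths disjoint from $S$: disjointness from $S$ is immediate as $S\subseteq S^{*}$; since $\hat R'_m$ is a tail of $R'_m$ through $r'_m$ we have $r'_mR'_m=r'_m\hat R'_m$, so $P_i$ meets $\bigcup_m V(r'_mR'_m)$ only in its endvertex $r'_{\sigma(i)}$; and as $V(R_{i'}r_{i'})\subseteq S^{*}\cup\{r_{i'}\}$ for every $i'$ while $P_i$ avoids $S^{*}$, the path $P_i$ meets $\bigcup_{i'}V(R_{i'}r_{i'})$ only among the $r_{i'}$, hence --- the $P_{i'}$ being pairwise disjoint with $r_{i'}\in P_{i'}$ --- only in its own endvertex $r_i$. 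Since $S$ was arbitrary this would give that $B_n$ is infinitely joined to $B_k$.

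The delicate point will be the handling of the \emph{initial} segments $R_{i'}r_{i'}$. Attaching the joining paths at the start vertices of the $R_i$ is the natural first try, but it fails, because $S$ may separate those start vertices from the common end, so that no $S$-avoiding path can reach the end from them; one is thus forced to attach far out along each ray, and then the initial segments are non-trivial and must be absorbed into the guard set beforehand. The content of the argument is therefore that $S^{*}$ is still finite and that replacing $S$ by $S^{*}$ does not move the ``last vertex on the set'' along any $R_i$, which is what makes one application of Lemma~\ref{ray_linking_lemma} suffice. (Alternatively one could iterate the lemma at deeper and deeper tails and, since only finitely many injections $\sigma$ occur, extract a recurring one --- this is also how one would obtain a single injection uniformly in $S$ --- but for the statement as phrased the guard-set argument is enough.)
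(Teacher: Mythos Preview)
Your argument is correct. The guard-set trick is exactly what is needed to make a single application of Lemma~\ref{ray_linking_lemma} in $G-S^{*}$ produce joining paths that meet $\bigcup_{i'}R_{i'}r_{i'}$ only in their own start vertices, and your verification of this point is clean.

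Your route differs from the paper's. The paper does not fix $S$ and work beyond it; instead it iterates Lemma~\ref{ray_linking_lemma} infinitely often, each time deleting the finite auxiliary graph $H$ from the previous round, so as to obtain infinitely many pairwise disjoint path systems $\mathcal P_1,\mathcal P_2,\dots$ between $B_n$ and $B_k$. Then, since only finitely many injections $\{1,\dots,n\}\to\{1,\dots,k\}$ exist, pigeonhole extracts an infinite subfamily all realising the \emph{same} injection $\sigma$; any finite $S$ is avoided by all but finitely many of these. This is precisely the alternative you sketch in your closing parenthetical. The paper's approach thus proves slightly more than the stated corollary---one injection $\sigma$ works uniformly for every $S$---but this extra strength is not used later: in the proof of Theorem~\ref{main_thm}\,(i)$\Rightarrow$(ii) the corollary is invoked only for one finite set $H_i$ at a time, so your more direct argument is entirely adequate.
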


\begin{proof}
First we apply Lemma~\ref{ray_linking_lemma} to the rays of $B_n$, say $\lbrace R_1, \ldots, R_n \rbrace$, and $B_k$, say $\lbrace R'_1, \ldots, R'_k \rbrace$.
Let $\mathcal{P}_1$ be the resulting path system.
Next we delete the finite subgraph $H$ of $G$ defined as in the proof of Lemma~\ref{ray_linking_lemma} from $G$.
By the definition of bundles, the tails of $B_n$ and $B_k$ in $G-H$ are still bundles and all of these tails are still equivalent.
Next we apply Lemma~\ref{ray_linking_lemma} to these tails and obtain a path system $\mathcal{P}_2$.
By iterating this argument, we get path systems $\mathcal{P}_i$ for $i \in \mathbb{N}$ such that $P \cap Q = \emptyset$ for every $P, Q \in \bigcup_{i \in \mathbb{N}} \mathcal{P}_i$ with $P \neq Q$ and each path system $\mathcal{P}_i$ connects the $n$ rays of $B_n$ with $n$ distinct rays of $B_k$.
Since there is only a finite bounded number of possibilities on which rays the start- and endvertices of the paths of some path system $\mathcal{P}_i$ can be, we obtain by the pigeon hole principle that there is an infinite subset $\lbrace \mathcal{P}'_j \; ; \; j \in \mathbb{N} \rbrace \subseteq \lbrace \mathcal{P}_i \; ; \; i \in \mathbb{N} \rbrace$ of path systems and an injection $\sigma: \lbrace 1, \ldots, n \rbrace \longrightarrow \lbrace 1, \ldots, k \rbrace$ such that each path system $\mathcal{P}'_j$ contains a path from $R_i$ to $R'_{\sigma(i)}$ for all $i \in \lbrace 1, \ldots, n \rbrace$.
So the set $\lbrace \mathcal{P}'_j \; ; \; j \in \mathbb{N} \rbrace$ of disjoint path systems witnesses that $B_n$ is infinitely joined to $B_k$.
\end{proof}

For $n$-bundles it follows from the pigeon hole principle that we can always find an infinite subset of the embracing cycles whose elements induce the same cyclic order on the rays of the $n$-bundle.
So without loss of generality we could assume that the embracing cycles of an $n$-bundle run through the rays of the bundle always in the same cyclic order.
We can do a similar thing for $\infty$-bundles, but it involves an application of the compactness principle rather than the pigeon hole principle.
So before we make the corresponding statement about $\infty$-bundles precise, let us state a version of the compactness principle we will make use of, namely K\"onig's Lemma:

\begin{lemma}\label{Koenig}\cite[Lemma~8.1.2]{diestel_buch}
Let $V_0, V_1, \ldots$ be an infinite sequence of disjoint non-empty finite sets, and let $G$ be a graph on their union. Assume that every vertex in a set $V_n$ with $n \geq 1$ has a neighbour in $V_{n-1}$. Then $G$ contains a ray $v_0v_1 \ldots$ with $v_n \in V_n$ for all $n$.
\end{lemma}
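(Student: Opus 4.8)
The plan is to carry out the classical level-by-level selection argument for K\"onig's Lemma. Call a vertex $v \in V_n$ \emph{extendable} if for every $m \geq n$ there is a path $v = w_n w_{n+1} \cdots w_m$ in $G$ with $w_i \in V_i$ for all $n \leq i \leq m$; note that this is equivalent to the existence of such paths from $v$ of unbounded length, since a path reaching $V_m$ also meets every $V_j$ with $n \leq j \leq m$ and the sets $V_i$ are pairwise disjoint. The desired ray will be obtained as a sequence $v_0, v_1, v_2, \ldots$ of extendable vertices with $v_n \in V_n$ and $v_n v_{n+1} \in E(G)$ for all $n$.

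First I would show that $V_0$ contains an extendable vertex. An easy induction on $m$, using that every vertex of $V_m$ with $m \geq 1$ has a neighbour in $V_{m-1}$ and that the sets $V_i$ are disjoint, shows that every vertex of $V_m$ is the final vertex of some path $w_0 w_1 \cdots w_m$ with $w_i \in V_i$ for all $i$. For each $m$ pick such a path and look at its first vertex, which lies in $V_0$; since $V_0$ is finite and non-empty, some single vertex $v_0 \in V_0$ occurs as this first vertex for infinitely many $m$, and hence, restricting those paths, for all $m$. Thus $v_0$ is extendable.

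The inductive step is the same pigeonhole argument applied one level higher. Suppose $v_n \in V_n$ is extendable. Every path $v_n w_{n+1} \cdots w_m$ witnessing this begins with an edge from $v_n$ to some neighbour $w_{n+1} \in V_{n+1}$, and $v_n$ has only finitely many neighbours in $V_{n+1}$ because $V_{n+1}$ is finite. Hence some fixed neighbour $v_{n+1} \in V_{n+1}$ of $v_n$ serves as the second vertex of such a witnessing path for infinitely many $m$, so $v_{n+1}$ is extendable. Performing this step for every $n$ yields the ray $v_0 v_1 v_2 \cdots$ with $v_n \in V_n$, as required.

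I expect the only point needing care to be the equivalence recorded in the definition of \emph{extendable} --- namely that having witnesses of unbounded length is the same as having a witness reaching \emph{every} level --- since it is exactly this that lets the finitely-many-neighbours pigeonhole argument close the induction. Beyond that, the proof is the standard finiteness argument underpinning K\"onig's Lemma, and no further machinery is needed.
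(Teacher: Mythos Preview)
Your argument is correct and is essentially the standard proof of K\"onig's Lemma. Note, however, that the paper does not actually prove this statement: it is quoted verbatim from Diestel's textbook \cite[Lemma~8.1.2]{diestel_buch} and used as a black box, so there is no ``paper's own proof'' to compare against. Your write-up would serve perfectly well as a self-contained proof if one were desired.
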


Now the next lemma tells us that we always obtain a consistent $\infty$-bundle from an $\infty$-bundle.

\begin{lemma}\label{cons_inf-bundle}
The rays of an $\infty$-bundle $B_{\infty}$ form also a consistent $\infty$-bundle witnessed by an infinite subset of the embracing cycles of $B_{\infty}$.
\end{lemma}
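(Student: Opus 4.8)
The plan is to manufacture, via König's Lemma, one cyclic order $\tau_m$ on each initial segment $R_1, \dots, R_m$ of the ray set that is simultaneously compatible with infinitely many embracing cycles of $B_\infty$ and with all the earlier $\tau_{m'}$, and then to select from those cycles an $\infty$-bundle that honours the $\tau_m$.

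Write $R_1, R_2, \dots$ for the rays of $B_\infty$ and let $C_i, c_i$ ($i \in \mathbb{N}$) be its embracing cycles together with their thresholds, so that $C_i$ meets each of $R_1, \dots, R_{c_i}$ in a path and $c_1 < c_2 < \cdots$; in particular $c_i \to \infty$. Each $C_i$ visits $R_1, \dots, R_{c_i}$ in a well-defined cyclic order, and for $m \le c_i$ I would say that $C_i$ \emph{induces} on $R_1, \dots, R_m$ the cyclic order obtained from this one by forgetting all rays of index above $m$; forgetting rays is transitive, so this is consistent. First I would build a graph $G^{\ast}$ for König's Lemma: for each $m \ge 1$ let its $m$-th level $V_m$ consist of the pairs $(m, \tau)$ with $\tau$ a cyclic order on $\{R_1, \dots, R_m\}$ that is induced by $C_i$ for infinitely many $i$, and join $(m, \tau)$ to $(m-1, \tau')$ exactly when $\tau'$ arises from $\tau$ by deleting $R_m$. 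Each $V_m$ is finite, and it is nonempty by the pigeon hole principle, since $c_i \to \infty$ yields infinitely many $C_i$ inducing some cyclic order on $R_1, \dots, R_m$ while there are only finitely many such orders; moreover every vertex of level $m \ge 2$ has a neighbour one level down, namely the order obtained by deleting $R_m$ from it, because the infinitely many cycles inducing $\tau$ on $R_1, \dots, R_m$ also induce the corresponding smaller order on $R_1, \dots, R_{m-1}$. Hence König's Lemma (Lemma~\ref{Koenig}), applied after the harmless relabelling making the levels begin at $0$, yields a ray $(1, \tau_1)(2, \tau_2) \cdots$ in $G^{\ast}$: the $\tau_m$ are cyclic orders with $\tau_m$ restricting to $\tau_{m-1}$ for every $m \ge 2$, each induced by infinitely many of the $C_i$.

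Next I would extract the required subbundle recursively. Suppose $i_1 < \cdots < i_{k-1}$ and $c'_1 < \cdots < c'_{k-1}$ have been chosen, with the convention $c'_0 := 0$ and $i_0 := 0$; put $c'_k := c'_{k-1} + 1$ and, using that $\tau_{c'_k}$ is induced by infinitely many $C_i$ with $c_i \ge c'_k$, choose such an $i_k > i_{k-1}$. I claim that $\{C_{i_k} : k \in \mathbb{N}\}$ together with the thresholds $c'_k$ is a consistent $\infty$-bundle on the rays $R_1, R_2, \dots$. It is an $\infty$-bundle because the $C_{i_k}$ are pairwise disjoint, the $c'_k$ strictly increase, and $C_{i_k}$ meets each $R_\ell$ with $\ell \le c'_k \le c_{i_k}$ in a path. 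It is consistent: if $k < l$, then $C_{i_k}$ induces $\tau_{c'_k}$ on $R_1, \dots, R_{c'_k}$ by the choice of $i_k$, while $C_{i_l}$ induces $\tau_{c'_l}$ on $R_1, \dots, R_{c'_l}$ and hence, restricting to the first $c'_k$ rays and using that the restriction of $\tau_{c'_l}$ to $\{R_1, \dots, R_{c'_k}\}$ equals $\tau_{c'_k}$, induces $\tau_{c'_k}$ on $R_1, \dots, R_{c'_k}$ as well; so $C_{i_k}$ and $C_{i_l}$ traverse $R_1, \dots, R_{c'_k}$ in the same cyclic order.

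The step I expect to be the main obstacle is reconciling the coherent sequence $(\tau_m)$ delivered by König's Lemma with genuine embracing cycles: a cycle $C_i$ need not induce $\tau_{c_i}$ on the whole segment $R_1, \dots, R_{c_i}$, only some cyclic order whose restrictions agree with the $\tau_m$ sufficiently far down. The recursion above is designed precisely to dodge this: at step $k$ one commits to the \emph{small} threshold $c'_k$ first and only afterwards hunts for one of the infinitely many cycles witnessing level $c'_k$, deliberately surrendering all control over how that cycle behaves on rays of index above $c'_k$; the coherence of the sequence $(\tau_m)$ then supplies consistency. The remaining points — that the induced cyclic orders behave well under deleting rays, and that the object constructed really is a bundle on the original ray set — are routine.
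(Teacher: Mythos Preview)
Your proof is correct and follows essentially the same K\"onig's Lemma argument as the paper: build a levelled auxiliary graph whose vertices are cyclic orders on initial segments of the ray set realised by (co)finitely many embracing cycles, extract an infinite compatible chain of such orders, and then pick witnessing cycles with increasing index. The only cosmetic difference is that the paper indexes its levels by the original thresholds $c_n$ (so level $n$ carries orders on $\{R_1,\dots,R_{c_n}\}$ realised by some $C_j$ with $j\ge n$) and keeps those thresholds in the output, whereas you index levels by $m$ directly, require ``infinitely many'' witnesses explicitly, and output thresholds $c'_k=k$; neither choice affects the argument.
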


\begin{proof}
Let $B_{\infty} = \lbrace R_1, R_2, \ldots \rbrace$ be an $\infty$-bundle of a graph and let $\lbrace C_i \; ; \; i \in \mathbb{N} \rbrace$ be the set of its embracing cycles.
Furthermore, let the natural numbers $c_i$ be given as in the definition of an $\infty$-bundle.
Now we define an auxiliary graph $G$ to apply K\"onig's Lemma.
For every $n \geq 1$ let $V_n \subseteq V(G)$ be the set of all cyclic orders of how an embracing cycle $C_j$ runs through the set of rays $\lbrace R_1, \ldots, R_{c_n} \rbrace$ for $j \geq n$. So each set $V_n$ is finite and non-empty.
Furthermore, let there be an edge in $G$ between vertices $v_n \in V_{n}$ and $v_{n+1} \in V_{n+1}$ if the cyclic order $v_{n+1}$ restricted to the set $\lbrace R_1, \ldots, R_{c_n} \rbrace$ is equal to $v_n$.
With these definitions all requirements for K\"onig's Lemma (Lemma~\ref{Koenig}) are fulfilled.
So $G$ contains a ray $v_1v_2 \ldots$ with $v_n \in V_n$ for every $n \geq 1$.
This allows us to take cycles $C_{k_n}$ such that $C_{k_n}$ induces $v_n$ on the rays $\lbrace R_1, \ldots, R_{c_n} \rbrace$ for every $n \geq 1$ where $k_n > k_{n'}$ holds for $n > n'$.
These cycles witness that $B_{\infty}$ is a consistent $\infty$-bundle.
\end{proof}

Now we are prepared to prove Theorem~\ref{main_thm}.

\begin{proof}[Proof of Theorem~\ref{main_thm}]
The implication from (ii) to (iii) is true by Lemma~\ref{cons_inf-bundle}.

Showing that (iv) follows from (iii) is not difficult.
We sketch the proof of this implication.
Construct subdivisions of the defining subgraphs $G_i$ of the Dartboard inductively.
Start with an embracing cycle of the consistent $\infty$-bundle that runs through $f(1) = 4$ rays of the $\infty$-bundle as $G_1$.
Now suppose we have already constructed a subdivision $H_n$ of $G_n$ and there are $f(n)$ tails $T_1, T_2, \ldots,  T_{f(n)}$ of rays of the $\infty$-bundle that intersect with $H_n$ only in their startvertices.
Pick $f(n)$ many embracing cycles $C'_1, C'_2, \ldots, C'_{f(n)}$ of the $\infty$-bundle that are disjoint from $H_n$, each traversing the tails $T_1, T_2, \ldots T_{f(n)}$, and another embracing cycle $C$ which is disjoint from $H_n$, comes later in the enumeration of all embracing cycles than the ones we have picked so far and traverses at least $f(n+1)$ many rays of the $\infty$-bundle including the $f(n)$ tails $T_i$.
Since the $\infty$-bundle is consistent, we can use the cycles $C, C'_1, \ldots, C'_{f(n)}$ and the tails $T_i$ to find a subdivision of $H_{n+1}$ together with $f(n+1)$ many tails of rays of the $\infty$-bundle that intersect with $H_{n+1}$ only in their startvertices.
In this step we possibly have to reroute some of the tails $T_i$ using the cycles $C'_i$ in order to get compatible paths from $H_n$ to $C \subseteq H_{n+1}$.
Using this construction the union $\bigcup_n H_n$ gives us a subdivision of the Dartboard.

The implications from (iv) to (v) and from (v) to (vi) are easy and so we omit the details.

Now we look at the implication from (i) to (ii).
Let $\omega$ be an end of a graph $G$ such that there are $n$-bundles $B_n = \lbrace R^n_1, \ldots, R^n_n \rbrace \subseteq \omega$ for every $n \in \mathbb{N}$.
We construct an $\infty$-bundle inductively.
In step $i$ we shall have a graph $H_i$ which satisfies the following properties:

\begin{enumerate}
\item $H_i$ is the union of disjoint cycles $C_1, \ldots, C_i$ and disjoint paths $P^i_1, \ldots, P^i_i$.
\item The intersection of $P^i_j$ with $C_k$ is a path for all $j, k$ with $j \leq k \leq i$.
\item $P^i_j \cap C_k = \emptyset$ holds for all $j, k$ with $k < j \leq i$.
\item Each path $P^i_j$ runs through the cycles $C_j, \ldots, C_i$ in the order of their enumeration.
\item $H_i \cap H_{i-1} = H_{i-1}$ for $1 < i$.
\item $P^{i-1}_j$ is an initial segment of $P^i_j$ for every $j \leq i$ with $1 < i$.
\item In $G$ there exist tails of rays of some $n$-bundle $B_n$ such that every endvertex of a path $P^i_j$ in $H_i - H_{i-1}$ with $j \leq i$ is a startvertex of one of these tails but apart from that the tails are disjoint from $H_i$.
\end{enumerate}

For $H_1$ we take an embracing cycle of $B_1$ as $C_1$ and set $H_1 = C_1$.
We define $P^1_1$ to be the trivial path which is the last vertex $v$ of $R^1_1$ on $C_1$.
So (1), (3), (4), (5) and (6) are obviously satisfied.
Property (2) holds by the definition of embracing cycle.
For (7) we can take the tail $vR^1_1$ of $R^1_1$.

Now suppose we have already defined $H_i$ which satisfies the seven stated properties.
Let $B_n$ be the $n$-bundle which we get from property (7) for step $i$.
By Corollary~\ref{inf_joined} we get that $B_n$ is infinitely joined to any $k$-bundle $B_k$ if $k \geq n$.
Let us fix an integer $k$ with $k > n \geq i$.
Since $H_i$ is a finite graph, we can find joining paths $Q_1, \ldots, Q_n$ from $B_n$ to $B_k$ which meet $H_i$ only in the endvertices of the paths $P^i_j$.
Now fix an embracing cycle $C$ of $B_k$ that is disjoint from $H_i$ such that the tails of the rays of $B_k$ starting from $C$ are disjoint from $H_i$ as well as from the joining paths $Q_j$.
We set $C_{i+1} = C$.
Furthermore, we define $P^{i+1}_j$ for $j \leq i$ to be the concatenation of $P^{i}_j$ with the joining path $Q_{j'}$ which it intersects and with the path $Q^C_{j'}$ where $Q^C_{j'}$ is the path which starts at the endvertex of $Q_{j'}$ which  lies on a ray of $B_k$ and follows that ray up to the last vertex that is in the intersection of this ray with $C_{i+1}$.
Since $k > i$ holds, there is a ray $R^*$ in $B_k$ whose tail with startvertex in $C_{i+1}$ does not intersect with any of the paths $P^{i+1}_j$.
Now we set $P^{i+1}_{i+1}$ to be the trivial path consisting of the last vertex on $R^*$ which lies also on $C_{i+1}$.
Finally, we set $H_{i+1}$ to be the union of $H_i$ with all paths $P^{i+1}_j$.
It remains to check that the definitions we made for step $i+1$ ensure that the properties (1) to (7) are also true for $H_{i+1}$.
Property (1), (5) and (6) are obviously true by definition.
Since (5) and (6) hold and the paths $Q_{j}$ and $Q^C_{j}$ are chosen to be disjoint from $H_i$ except for one starting vertex of each $Q_{j}$, we need to check property (2) just for the paths $P^{i+1}_j$ and the cycle $C_{i+1}$.
Note that the intersection of a path $P^{i+1}_j$ with the cycle $C_{i+1}$ is equal to the intersection of one of the rays of $B_k$ with $C_{i+1}$.
So this intersection is just a path because $C_{i+1}$ is an embracing cycle of $B_k$.
Property (3) and (4) are valid because of property (2) and since $P^{i+1}_j - P^{i}_j$ is disjoint from $H_i$.
The bundle $B_k$ together with suitable tails of its rays starting in $C_{i+1}$ we chose in the construction for step $i+1$ witnesses that property (7) holds.

Using the sequence of graphs $(H_i)_{i \in \mathbb{N}}$, we are able to define an $\infty$-bundle $B_{\infty}$.
We set $R^{\infty}_j = \bigcup_{i \in \mathbb{N}}P^i_j$ for every $j \in \mathbb{N}$ and then $B_{\infty} = \lbrace R^{\infty}_j \; ; \; j \in \mathbb{N} \rbrace$.
Property (6) ensures that each $R^{\infty}_j$ is a ray and the disjoint cycles $C_i$ together with property (2) ensure that $B_{\infty}$ is indeed an $\infty$-bundle.
This completes the proof that (i) implies~(ii).

It remains to prove the implication from (vi) to (i).
Let $\omega$ be the end of $G$ which contains $\mathcal{R}$ as a subset.
Next let us fix an arbitrary $k \in \mathbb{N}$ and show that there is a $k$-bundle in the graph $G$ all whose rays are elements of $\omega$.
For this purpose we choose $n$ disjoint rays $R_1, \ldots, R_n$ from the set $\mathcal{R}$ where $n$ is as big as the integer $n$ from Lemma~\ref{C_k_or_K_2,k} with our fixed $k$ as input.
Next we define an auxiliary graph $H$ to which we shall apply that lemma.
First set $V(H) = \lbrace R_1, \ldots, R_n \rbrace$.
Furthermore, we say that there is an edge $R_iR_j$ if and only if there exist infinitely many disjoint $R_i$--$R_j$ paths in $G$ which are disjoint from all rays in $\lbrace R_1, \ldots , R_n \rbrace \setminus \lbrace R_i, R_j \rbrace$.
In order to apply Lemma~\ref{C_k_or_K_2,k} to $H$, we need to check that $H$ is $2$-connected.
Suppose for a contradiction that there exists a ray $R_{\ell}$ such that $H-R_{\ell}$ is not connected.
So we can find a bipartition $(A, B)$ of $V(H) \setminus \lbrace R_{\ell} \rbrace$ which yields an empty cut of $H$.
Now let us fix a ray $R \in A$ and $R' \in B$.
We know by assumption that $R$ and $R'$ are equivalent in $G - R_{\ell}$.
This implies that there are infinitely many disjoint $R$--$R'$ paths in $G - R_{\ell}$.
Using the pigeon hole principle and the fact that $A$ and $B$ contain less than $n$ rays, infinitely many of these paths have a common last ray of $A$ and a common first ray in $B$ which they intersect, but this tells us that there exists an $A$--$B$ edge in $H-R_{\ell}$.
So we have a contradiction and can conclude that $H$ is $2$-connected.
Now we apply Lemma~\ref{C_k_or_K_2,k} to $H$.
If the lemma tells us that $H$ contains a subdivided cycle of length at least $k$, then we immediately also get a $k$-bundle in $G$ all whose rays are elements of $\omega$.
So suppose there is a subdivision of $K_{2, k}$ in $H$.
Without loss of generality let $R^*_1$, $R^*_2$ and $R_1, \ldots, R_k$ be branch vertices of the subdivided $K_{2, k}$ in $H$ such that there are disjoint paths from $R^*_1$ and $R^*_2$ to $R_i$ for every $i$ with $1 \leq i \leq k$ in $H$.
Now we use the rays $R^*_1$ and $R^*_2$ as distributing rays in $G$ to build infinitely many disjoint cycles that witness $\lbrace R_1, \ldots, R_k \rbrace$ being a $k$-bundle.
The cycles can be built all in the same way:
First pick a $R_1$--$R^*_2$ path $P^*_1$ which is disjoint from $R^*_1$ and from each ray $R_i$ for $1 \leq i \leq k$ and $i \neq 1$. Now start at the endvertex of $P^*_1$ on $R_1$ and follow that ray until there is a $R_1$--$R^*_1$ path $P_1$ which is disjoint from $R^*_2$, $P^*_1$ and from each ray $R_i$ for $1 \leq i \leq k$ and $i \neq 1$.
Then follow $P_1$ and $R^*_1$ afterwards until there is a $R^*_1$--$R_2$ path which is disjoint from $R^*_2$, $P^*_1$, $P_1$ and from each ray $R_i$ for $1 \leq i \leq k$ and $i \neq 2$.
Repeating this pattern we get a $R^*_2$--$R_k$ path $Q$ which meets every ray $R_i$ for $1 \leq i \leq k$ only in a path.
Then we can close $Q$ to obtain a cycle by following $R_k$ from the endvertex of $Q$ on $R_k$ until there is a $R_k$--$R^*_2$ path $P^*_2$ that is disjoint from $R^*_1$, from each ray $R_i$ for $1 \leq i \leq k$ and $i \neq k$ and from each path we have used so far, then following $P^*_2$ and finally using the $P^*_2$--$P^*_1$ path on $R^*_2$.
By deleting large enough initial segments from all rays, we can repeat the construction of such cycles infinitely often and obtain the desired sequence of disjoint cycles witnessing that $\lbrace R_1, \ldots, R_k \rbrace \subseteq \omega$ is a $k$-bundle.
\end{proof}

Using Theorem~\ref{main_thm} we prove now Corollary~\ref{cor_main}, which describes the structure of graphs without $\mathbb{Z} \times \mathbb{Z}$ grid minor in terms of bundle-narrow tree-decompositions.

\begin{proof}[Proof of Corollary~\ref{cor_main}]
Let $G$ be a graph and let us assume that it does not contain a $\mathbb{Z} \times \mathbb{Z}$ grid minor.
So $G$ cannot contain a subdivision of $K_{\aleph_0}$ either and we can apply Theorem~\ref{K_aleph_NST} telling us that $G$ has a normal spanning tree.
Using Lemma~\ref{NST_gives_td} we obtain a tree-decomposition of $G$ into finite parts distinguishing all ends.
Now we know that for every ray $t_1 t_2 \ldots$ of $T$ all rays of $G$ that intersect all but finitely many of the parts $V_{t_i}$ are equivalent in $G$.
Using the equivalence of (i) and (v) in Theorem~\ref{main_thm}, we can furthermore find for each end of $G$ the least integer $k \geq 1$ such that no $k$-bundle exists in this end.
Combining these two observations, we can find for every ray $t_1 t_2 \ldots$ of $T$ the least integer $k \geq 1$ such that there is no $k$-bundle in $G$ whose rays intersect with all but finitely many of the parts $V_{t_i}$.
So our tree-decomposition of $G$ into finite parts which distinguishes all ends is already bundle-narrow.

For the other direction let us assume that a graph graph $G$ has a $\mathbb{Z} \times \mathbb{Z}$ grid minor and suppose for a contradiction that it also has a bundle-narrow tree-decomposition $(T, (V_t)_{t \in V(T)})$ into finite parts.
Using that all parts $V_t$ are finite, we can look at the last time a ray $R$ of $G$ leaves a part $V_t$.
In this way $R$ induces a ray $t_1 t_2 \ldots$ of $T$ such that $R$ intersects each part $V_{t_i}$.
Note that equivalent rays in $G$ induce rays in $T$ which have a common tail, because they cannot be separated by finitely many vertices in $G$.
By the equivalence of (i) and (v) in Theorem~\ref{main_thm}, there exists an end of $G$ which contains $n$-bundles for every $n \in \mathbb{N}$.
We know that the rays of all these bundles induce rays of $T$ that lie in the same end of $T$.
Now any ray of $T$ that belongs to this end of $T$ contradicts our assumption that the tree-decomposition is bundle-narrow.
\end{proof}

\pagebreak


\begin{thebibliography}{xx}


\bibitem{diestel_buch} R. Diestel, \textit{Graph Theory}, fourth ed., Springer-Verlag, 2012.

\bibitem{diestel_arx} R. Diestel, \textit{Locally finite graphs with ends: a topological approach},  arXiv:0912.4213v3  (2012).

\bibitem{new_grid} R. Diestel, \textit{A short proof of Halin's grid theorem}, Abh. Math. Sem. Univ. Hamburg \textbf{74} (2004) 137--242.

\bibitem{halin} R. Halin, \textit{\"Uber die Maximalzahl fremder unendlicher Wege}, Math. Nachr. \textbf{30} (1965) 63--85.

\bibitem{halin_nst} R. Halin, \textit{Simplicial decompositions of infinite graphs}, Ann. Discrete Math. \textbf{3} (1978) 93--109.


\bibitem{rob_sey_thom} N. Robertson, P.D. Seymour and R. Thomas, \textit{Excluding infinite clique minors}, Memoirs Amer. Math. Soc. \textbf{118}, no. 566 (1995) vi+103 pp.

\bibitem{stein} M. Stein, \textit{Extremal infinite graph theory}, Discrete Math. \textbf{311} (2011) 1472--1496.

\end{thebibliography}
\end{document}